\begin{document}
\title{GAMMA KERNEL ESTIMATION OF THE DENSITY DERIVATIVE ON THE POSITIVE SEMI-AXIS BY DEPENDENT DATA}
\renewcommand{\titleheading}
             {Gamma kernel estimation}  
\author{\authoraddress{L. A. Markovich}
                      {Institute of Control Sciences,
         Russian Academy of Sciences,\\
Moscow, Russia.
                       \ (kimo1@mail.ru)}
}
\renewcommand{\authorheading}
             {L. A. Markovich}  

\maketitle

\begin{abstract}
We estimate the derivative of a probability density function defined on
 $[0,\infty)$. For this purpose, we choose the class of kernel estimators with asymmetric
gamma kernel functions. The use of gamma kernels is fruitful due to the fact that
they are nonnegative, change their shape depending on the position
on the semi-axis and possess good boundary properties for a wide class of densities. We find an optimal bandwidth
of the kernel as a minimum of the mean integrated squared error by dependent data with strong mixing. This bandwidth
differs from that proposed for the gamma kernel density estimation.
To this end, we derive the covariance of derivatives of the density and deduce its upper bound. Finally,
the obtained results  are applied to the case of a first-order autoregressive process with strong mixing. The accuracy of the
estimates is checked by a simulation study. The comparison of the proposed estimates based on independent and dependent data
is provided.
\end{abstract}

\begin{keywords}
Density derivative; Dependent data; Gamma kernel; Nonparametric estimation.
\end{keywords}

\begin{ams}
60G35, 60A05.
\end{ams}

\mainpaper  

\section{INTRODUCTION}  \label{Sec1}              
Kernel density estimation is a non-parametric method to estimate a probability density function (pdf) $f(x)$. It was originally studied in
\cite{Parzen}, \cite{Rosenblatt} for symmetric kernels and univariate independent identically distributed (i.i.d) data. When the support of the underlying pdf is unbounded, this approach performs well. If the pdf has a support on $[0,\infty)$, the use of classical estimation methods with symmetric
kernels yield a large bias on the zero boundary and leads to a bad quality of the estimates \cite{WandJones}. This is due to the fact that symmetric kernel estimators assign nonzero weight at the interval $(-\infty,0]$.  There are several methods to
reduce  the boundary bias effect, for example, the data reflection \cite{Schuster}, boundary kernels \cite{Muller}, the hybrid method \cite{HallWehrly}, the local linear estimator \cite{LejeuneSarda}, \cite{Jones} among others.
Another approach is to use  asymmetric kernels. In case of univariate nonnegative i.i.d random variables (r.v.s), the  pdf estimators with gamma kernels were proposed in \cite{Chen:20}. In \cite{TaufikBouezmarnia:Rom2} the gamma-kernel estimator was developed for univariate dependent data. The gamma kernel is nonnegative and it changes its shape depending on the position on the semi-axis. Estimators constructed with gamma kernels have no boundary bias if $f''(0)=0$ holds, i.e when the underlying density $f(x)$ has a shoulder at $x = 0$ (see formula (4.3) in \cite{Zhang}). This shoulder property is fulfilled particularly for a wide exponential class of pdfs which satisfy important integral condition
\begin{equation}\int_{0}^{\infty}x^{-1/2}f(x)dx<\infty\label{0}
\end{equation}
assumed in \cite{Chen:20}. In \cite{Zhang} the half normal and standard exponential pdfs are considered as examples such that the boundary kernel $K_c(t)$ (p. 553 in \cite{Zhang}) gives the better estimate  than the gamma-kernel estimator considered in \cite{Chen:20}.
At the same time, the exponential distribution does not satisfy both the shoulder condition and the condition \eqref{0}. The half normal density satisfies the shoulder condition, but it does not satisfy \eqref{0}. Since \eqref{0} is not valid for the latter pdfs, such comparison is not appropriate.
\par Alternative asymmetrical kernel estimators like inverse Gaussian and reciprocal inverse Gaussian estimators were studied in \cite{Scailet}. The comparison of these asymmetric kernels with the gamma kernel is given in \cite{BouSca}.
\par Along with the density estimation it is often necessary  to estimate the derivative of a pdf.
Derivative estimation is important in the exploration of structures in curves, comparison of regression curves, analysis of human growth data, mean shift clustering or  hypothesis testing. The estimation of the density derivative is required to estimate the logarithmic derivative of the density function.
The latter has a practical importance in finance, actuary mathematics, climatology and signal processing.
However, the problem of the density derivative estimation has received less attention.
It is due to a significant increasing complexity of calculations, especially for the multivariate case.
The boundary bias problem  for the multivariate pdf becomes more solid \cite{TaufikBouezmarnia:Rom}.
The pioneering papers devoted to univariate symmetrical kernel density derivative estimation are \cite{Bhattacharya}, \cite{Schuster2}.
\par The paper does not focus on the boundary performance but on finding of the optimal bandwidth that is appropriate for the pdf derivative estimation in case of dependent data satisfying a strong mixing condition.
In \cite{WandJones} an optimal mean integrated squared error (MISE) of the kernel estimate of the first derivative of order $n^{-\frac{4}{7}}$ was indicated.
This corresponds to the optimal bandwidth of order $n^{-\frac{1}{7}}$ for symmetrical kernels.
The estimation of the univariate density derivative using a gamma kernel estimator  by independent data was proposed in \cite{DobrovidovMarkovich:13a}, \cite{DobrovidovMarkovich:13b}.
This allows us to achieve the optimal MISE of the same order  $n^{-4/7}$ with a bandwidth of order $n^{-\frac{2}{7}}$.
\subsection{Contributions of this paper}
It is shown that in the case of dependent data, assuming strong mixing, we can estimate the derivative of the pdf using the same technique that has been applied for independent data in \cite{DobrovidovMarkovich:13a}. Lemma \ref{Lem1}, Section \ref{SectLem1} contains  the upper bound of the covariance.
The mathematical technic applied for the derivative estimation is similar to one applied for the pdf. However, formulas became much more complicated,
particulary because one has to deal with the special Digamma function that includes the bandwidth $b$. Thus, one has to pick out the order by $b$ from complicated expressions containing logarithms and the special function.
In Section \ref{SectMISE} we find the optimal bandwidth $b\sim n^{-2/7}$ which is different from the optimal bandwidth $b_{2}^{\ast}\sim n^{-2/5}$ proposed for the pdf estimation (see \cite{Chen:20}, p. 476).
In Fig.~\ref{Fig0} it is shown that the use of $b_{2}^{\ast}$ to estimate the pdf derivative leads to a bad quality (for simplicity the i.i.d data were taken). We prove that the optimal $MISE$ of the pdf derivative has the same rate of convergence  to the true pdf
derivative as for the independent case, namely $O(n^{-4/7})$.
\begin{figure}[Ht]
\begin{center}
\begin{minipage}[Ht]{0.60\linewidth}
\includegraphics[width=1\linewidth]{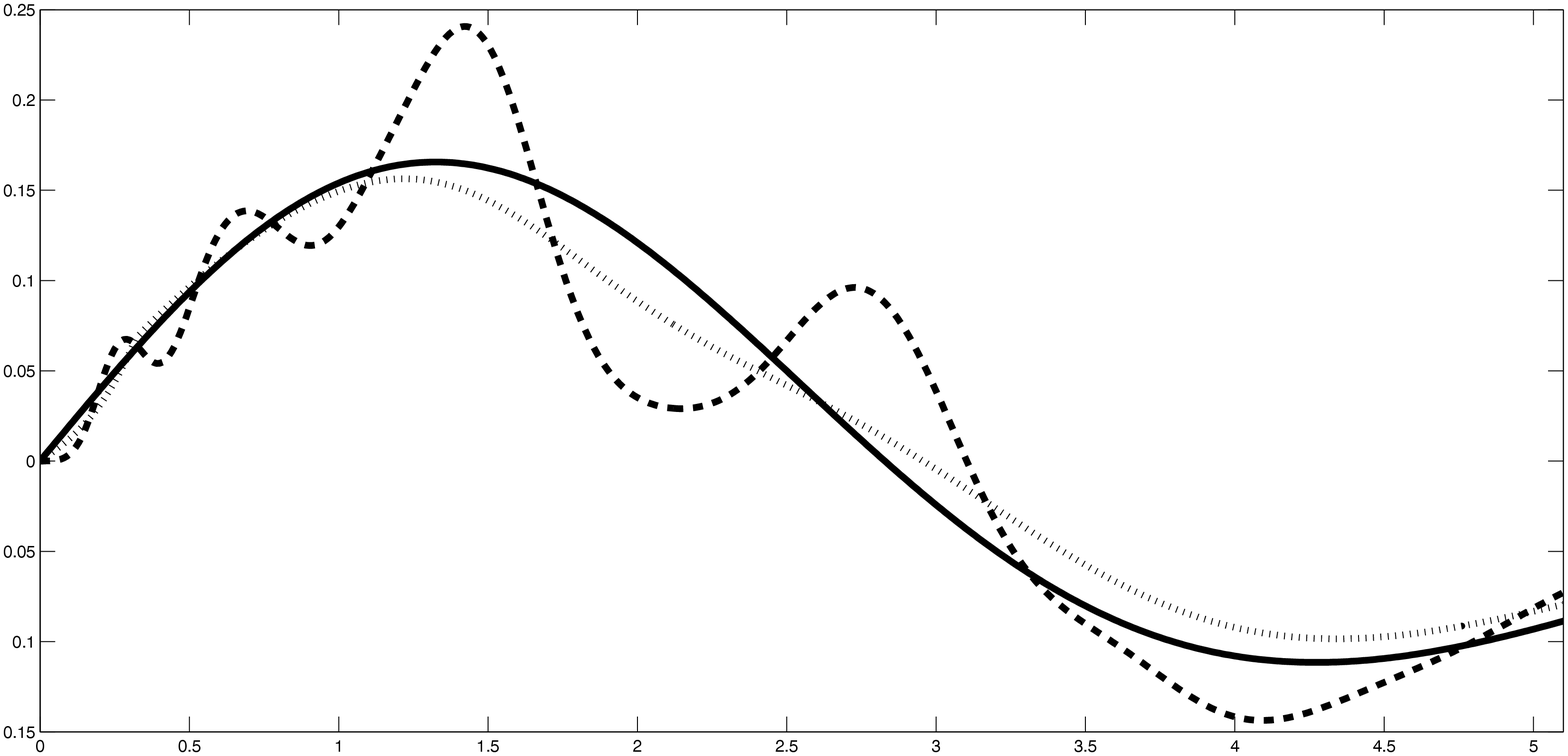}
\vspace{-4mm}
\end{minipage}
 \caption{Nonparametric gamma-kernel estimation of Maxwell density derivative
function for sample size n=2000. The pdf derivative (solid line), the estimate with $b$
(dotted gray line), the estimate with $b_{2}^{\ast}$ (dashed  line).\label{Fig0}}
\end{center}
\end{figure}
We  show in Section \ref{SectAR} that for the strong mixing autoregressive process of the first order (AR(1)) all results are valid without additional conditions.
In Section \ref{Sim} a simulation study for i.i.d and dependent samples is performed. The flexibility of the gamma kernel allows us to fit accurately the multi-modal pdf derivatives.
\subsection{Practical motivation}
In practice it is often necessary to deal with sequences of observations that are derived from stationary processes satisfying the strong mixing condition.
As an example of such processes one can take  autoregressive processes like in Section \ref{SectAR}.
Along with the evaluation of the density function and its derivative
by dependent samples, the estimation of the logarithmic derivative of the density is an actual problem. The logarithmic pdf derivative  is the ratio of the derivative of the pdf to the pdf itself. The pdf derivative  estimation is necessary for an optimal filtering in the signal processing and control of nonlinear processes where only the exponential pdf class is used, \cite{Dobrovidov:12}.  
Moreover, the pdf derivative gives information about the slope of the pdf curve, its local extremes, significant features in data and it is useful in regression analysis \cite{Brabanter}. The pdf derivative also plays a key role in clustering via mode seeking \cite{Sasaki}.
\subsection{Theoretical background}
Let $\{X_i;i=1,2,\ldots\}$ be a strongly stationary sequence with an unknown probability density function  $f(x)$,
which is defined on  $x\in[0,\infty)$. We assume  that the sequence $\{X_i\}$ is $\alpha-$mixing with coefficient
\begin{equation*}
    \alpha(i)  = \sup\limits_{k}\sup\limits_{\genfrac{}{}{0pt}{}{A\in \mathcal{F}_1^k(X)}{B\in \mathcal{F}_{k+i}^\infty(X)}}|P(A\cap B)-P(A)P(B)|.\label{1}
\end{equation*}
Here, $\mathcal{F}_{i}^k(X)$ is the $\sigma$-field of events generated by $\{X_j, i\leq j\leq k\}$ and $\alpha(i)\rightarrow0$ as $i\rightarrow\infty$.
For these sequences we will use a notation $\{X_j\}_{j\geq1}\in\mathcal{S}(\alpha)$.
Let  $f_i(x,y)$ be a joint density of $X_1$ and $X_{1+i}$, $i=1,2,\ldots$.
\par Our objective is to estimate the derivative $f'(x)$ by a known sequence of observations $\{X_i\}$. We use the non-symmetric gamma kernel estimator that was defined in \cite{Chen:20} by the formula
\begin{equation}
\widehat{f}_n(x) = \frac{1}{n}\sum\limits_{i=1}^{n}K_{\rho_b(x),b}(X_i)\label{2}.
\end{equation}
Here
\begin{equation}\label{Krho}K_{\rho_b(x),b}(t)=\frac{t^{\rho_b(x)-1}\exp(-t/b)}{b^{\rho_b(x)}\Gamma(\rho_b(x))}
\end{equation}
is the kernel function, $b$ is a smoothing parameter (bandwidth) such that $b\rightarrow 0$ as $n\rightarrow\infty$,
$\Gamma(\cdot)$ is a standard gamma function and
\begin{eqnarray}\label{rho}
\rho_b(x)&=& \left\{
\begin{array}{ll}
\rho_1(x) = x/b, &   \mbox{if}\qquad x\geq 2b,
\\
\rho_2(x) =\left(x/(2b)\right)^2+1, & \mbox{if}\qquad x\in
[0,2b).
\end{array}
\right.
\end{eqnarray}
The use of gamma kernels is due to the fact that they are nonnegative, change their shape depending on the position
on the semi-axis and possess better boundary bias than symmetrical kernels. The boundary
bias becomes larger for multivariate densities. Hence, to overcome this problem  the gamma kernels were applied in \cite{TaufikBouezmarnia:Rom}.
Earlier the gamma kernels were only used for the density estimation of identically distributed sequences in \cite{TaufikBouezmarnia:Rom}, \cite{Chen:20}  and for stationary sequences in \cite{TaufikBouezmarnia:Rom2}.
\par To our best knowledge, the gamma kernels have been applied to the density derivative estimation at first time in \cite{DobrovidovMarkovich:13a}.
 In this paper the derivative $f'(x)$ was estimated under the assumption that $\{X_1,X_2,\ldots,X_n\}$ are i.i.d random variables as derivative of \eqref{2}. This implies
 that
\begin{eqnarray}\label{f'(x)}
\hat{f}'_n(x) &=& \frac{1}{n}\sum_{i=1}^{n}K'_{\rho_b(x),b}(X_i)
\end{eqnarray}
holds, where
\begin{eqnarray}\label{K'}
K'_{\rho_b(x),b}(t)&=&\left\{
\begin{array}{ll}
K'_{\rho_1(x),b}(t)=\frac{1}{b}K_{\rho_1(x),b}(t)L_1(t),
& \mbox{if}\quad x\geq 2b,\\
K'_{\rho_2(x),b}(t)=\frac{x}{2b^2}K_{\rho_2(x),b}(t)L_2(t),&
\mbox{if}\quad x\in [0,2b),
\end{array}
\right.
\end{eqnarray}
is the derivative of $K_{\rho(x),b}(t)$,
\begin{eqnarray}\label{L}
L_1(t)&=&L_1(t,x)= \ln t - \ln b - \Psi(\rho_1(x)),\\
L_2(t)&=&L_2(t,x)= \ln t - \ln b - \Psi(\rho_2(x)),\nonumber
\end{eqnarray}
Here $\Psi(x)$  denotes  the Digamma function (the logarithmic derivative of the gamma
function).
 The unknown smoothing parameter $b$ was obtained as the minimum of the mean integrated squared error ($MISE$) which, as
known, is equal to
\begin{eqnarray}\label{3}
MISE(\hat{f}'_n(x))&=&
\mathsf E\int\limits_0^\infty(f'(x)-\hat{f}'_n(x))^2dx. \notag
\end{eqnarray}
\begin{remark}
The latter integral can be splitted into two integrals $\int_{0}^{2b}$ and $\int_{2b}^{\infty}$.
In the case when $x\geq 2b$  the integral $\int_{0}^{2b}$ tends to zero when $b\rightarrow0$. Hence, we omit the consideration of this integral in contrast to
 \cite{Zhang}. The first integral has the same order by $b$ as the second one, thus it cannot affect on the selection of the optimal bandwidth.
 \end{remark}
The following theorem has been proved.
\begin{theorem}\label{Thb}
\cite{DobrovidovMarkovich:13a}
\par If $b\rightarrow 0$ and \ $
nb^{3/2}\rightarrow \infty$ as $n\rightarrow \infty$, the  integrals
 \begin{eqnarray*}
\int\limits_{0}^\infty
P(x)dx, \quad\int\limits_{0}^\infty
x^{-3/2} f(x)dx
\end{eqnarray*}
are finite and $\int\limits_{0}^\infty P(x)dx\neq0$, then the leading term of a MISE expansion of the
density derivative estimate  $\hat{f}'(x)$ is equal to
 \begin{eqnarray} \label{5}&& MISE(\hat f'_n(x))=\frac{b^2}{16}\int_{0}^\infty
 P(x)dx\\
 &+&\int_0^\infty \frac{n^{-1}b^{-3/2}x^{-3/2}}{4\sqrt{\pi}}\left(f(x)+b\left(\frac{f(x)}{2x}-\frac{f'(x)}{2}\right)\right)dx
+o(b^2 + n^{-1}(b^{-3/2})).\nonumber
\end{eqnarray}
where
\begin{eqnarray*}
P(x)&=&\left(\frac{f(x)}{3x^2}+f''(x)\right)^2.
\end{eqnarray*}
\end{theorem}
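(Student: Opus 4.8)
The plan is to split
$MISE(\hat f'_n)=\int_0^\infty\bigl\{\mathrm{Bias}^2(\hat f'_n(x))+\mathrm{Var}(\hat f'_n(x))\bigr\}\,dx$
and analyse the two parts separately. For i.i.d.\ data the integrated variance collapses to $n^{-1}\int_0^\infty\mathrm{Var}\bigl(K'_{\rho_b(x),b}(X_1)\bigr)\,dx$ because the cross covariances vanish, so everything reduces to one-dimensional integrals against the kernel. Following the Remark I would work on $x\geq 2b$, where $\rho_b(x)=\rho_1(x)=x/b$ and $K'_{\rho_1(x),b}(t)=b^{-1}K_{\rho_1(x),b}(t)L_1(t)$, and verify at the end that the omitted piece $\int_0^{2b}$ is of no larger order in $b$ and hence does not alter the leading term.

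For the bias term I would write $\mathsf E\hat f'_n(x)-f'(x)=b^{-1}\int_0^\infty K_{\rho_1(x),b}(t)L_1(t)f(t)\,dt-f'(x)$ and use that, for $t\sim\mathrm{Gamma}(\rho_1(x),b)$, the power moments are $\mathsf E[t^{s}]=b^{s}\Gamma(\rho_1+s)/\Gamma(\rho_1)$, whence differentiating in $s$ gives $\mathsf E[t^{s}\ln t]=b^{s}\frac{\Gamma(\rho_1+s)}{\Gamma(\rho_1)}\bigl(\ln b+\Psi(\rho_1+s)\bigr)$; this yields closed forms for the "centred-log" moments $\mathsf E[L_1(t)(t-x)^{k}]$ purely in terms of $\Psi(\rho_1+k)-\Psi(\rho_1)$. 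Expanding $f(t)$ in a Taylor series about $x$, substituting these moments, and then expanding $\Psi(x/b+k)-\Psi(x/b)$ and $\Psi(x/b)$ themselves through the asymptotic series $\Psi(z)=\ln z-\frac{1}{2z}-\frac{1}{12z^{2}}+O(z^{-3})$ — legitimate because $\rho_1(x)=x/b\to\infty$ for fixed $x>0$ — I would collect the terms of exact order $b$; squaring and integrating this bias produces $\frac{b^{2}}{16}\int_0^\infty P(x)\,dx$, the hypothesis $0\neq\int_0^\infty P(x)\,dx<\infty$ guaranteeing that this term is finite and genuinely leading.

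For the variance the dominant contribution is $n^{-1}\mathsf E\bigl[(K'_{\rho_1(x),b}(X_1))^{2}\bigr]=n^{-1}b^{-2}\int_0^\infty K_{\rho_1(x),b}^{2}(t)L_1^{2}(t)f(t)\,dt$, the squared-mean part being only $O(n^{-1})$. I would write $K_{\rho_1(x),b}^{2}(t)=A(\rho_1,b)\,g(t)$ where $g$ is the $\mathrm{Gamma}(2\rho_1(x)-1,\,b/2)$ density and $A(\rho_1,b)=\Gamma(2\rho_1-1)/\bigl(2^{2\rho_1-1}b\,\Gamma(\rho_1)^{2}\bigr)$; the Legendre duplication formula together with Stirling gives $A(\rho_1,b)=\frac{1}{2\sqrt\pi}\,b^{-1/2}x^{-1/2}\bigl(1+O(b/x)\bigr)$ with a computable first correction. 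Under $g$, $L_1(t)$ is again an almost-centred log whose second moment equals $\Psi'(2\rho_1-1)+O(\rho_1^{-2})\sim b/(2x)$ by $\Psi'(z)=z^{-1}+O(z^{-2})$; a Taylor expansion of $f$ about $x$ (whose mean under $g$ is $x-b/2$) then yields $\mathrm{Var}(\hat f'_n(x))=\frac{n^{-1}b^{-3/2}x^{-3/2}}{4\sqrt\pi}\bigl(f(x)+b(\frac{f(x)}{2x}-\frac{f'(x)}{2})\bigr)+(\text{lower order})$. Integrability of $x^{-3/2}f(x)$ makes the leading integral finite, and $nb^{3/2}\to\infty$ forces this whole term to $0$.

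Adding the two pieces and controlling all Taylor and Stirling remainders uniformly enough in $x$ — using the two integrability hypotheses together with $b\to0$ — so that they integrate to $o(b^{2}+n^{-1}b^{-3/2})$, together with the check that the discarded $\int_0^{2b}$ contribution is also $O(b^{2}+n^{-1}b^{-3/2})$, gives \eqref{5}. I expect the bias expansion to be the real obstacle: unlike in the pure density case, $L_1$ carries the Digamma function $\Psi(x/b)$, so one must disentangle the $b$-order from expressions that simultaneously contain $\ln t$, $\ln b$, $\Psi(x/b)$ and $\Psi'(x/b)$, keep enough terms of the Stirling series for the curvature-type contribution $f(x)/(3x^{2})$ in $P(x)$ to emerge, and ensure that after this bookkeeping the remainders stay uniformly negligible once integrated, given only the finiteness of $\int_0^\infty P(x)\,dx$ and $\int_0^\infty x^{-3/2}f(x)\,dx$.
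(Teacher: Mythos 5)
Your variance analysis is sound and is essentially the standard route (writing $K_{\rho_1(x),b}^2(t)$ as a constant times a $\mathrm{Gamma}(2\rho_1-1,b/2)$ density, then duplication formula, Stirling, and $\Psi'(z)=z^{-1}+O(z^{-2})$ correctly give the $\tfrac{n^{-1}b^{-3/2}x^{-3/2}}{4\sqrt{\pi}}f(x)$ term). Note also that the present paper does not prove Theorem \ref{Thb} at all — it imports it from \cite{DobrovidovMarkovich:13a} — but the technique behind it is visible in the proof of Lemma \ref{Lem1}: a Taylor expansion of the \emph{composite} function $t\mapsto K(t)^{q-1}L(t)^qf(t)$ about the gamma mean $\mu=\rho b$, truncated after the term $(\cdot)''|_{\xi=\mu}\,\mathsf E(\xi-\mu)^2/2$, combined with $L_1(x)=\tfrac{b}{2x}+\tfrac{b^2}{12x^2}+o(b^2)$.

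The genuine gap is in your bias step, precisely at the claim that ``squaring and integrating this bias produces $\tfrac{b^2}{16}\int_0^\infty P(x)\,dx$''. If you carry out your own program with the exact log-moments you propose, you get $\mathsf E[L_1(\xi)]=0$, $\mathsf E[L_1(\xi)(\xi-x)]=b$, $\mathsf E[L_1(\xi)(\xi-x)^2]=b^2$ and $\mathsf E[L_1(\xi)(\xi-x)^3]=3xb^2+2b^3$ exactly (for $\xi\sim\mathrm{Gamma}(x/b,b)$), so that $\mathrm{Bias}(\hat f'_n(x))=\tfrac{b}{2}\bigl(f''(x)+xf'''(x)\bigr)+O(b^2)$: the cubic term cannot be dropped because the gamma's third central moment $2xb^2$ contributes to the bias at the same order $b$ as the variance term, and with all terms kept the $f(x)/(3x^2)$ contribution cancels while an $xf'''(x)$ term appears. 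In other words, your (more careful) route does not reproduce the constant $P(x)=\bigl(\tfrac{f(x)}{3x^2}+f''(x)\bigr)^2$ of \eqref{5}; that constant arises only if one expands $t\mapsto L_1(t)f(t)$ (equivalently the composite in the paper's Lemma \ref{Lem1}) to second order about $\mu=x$ and stops there, which yields $\mathrm{Bias}\approx\tfrac{b}{4}\bigl(\tfrac{f(x)}{3x^2}+f''(x)\bigr)$. So as written the proposal cannot deliver the stated leading bias term: you must either switch to that truncated composite expansion (the derivation the statement actually rests on, at the price of discarding third/fourth-moment contributions of the same formal order), or accept that your exact computation leads to a bias expansion involving $f'''$ that differs from \eqref{5}.
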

Taking the derivative of \eqref{5} in $b$ leads to equation
\begin{eqnarray}\label{b^8}
&&\frac{b}{8}\int_{0}^\infty
\left(\frac{f(x)}{3x^2}+f''(x)\right)^2dx-
\frac{3n^{-1}b^{-\frac{5}{2}}}{8\sqrt{\pi}}\int_0^\infty x^{-\frac{3}{2}}f(x)dx\\\nonumber
&+&\frac{n^{-1}b^{-\frac{3}{2}}}{16\sqrt{\pi}}\int_0^\infty x^{-\frac{3}{2}}\left(\frac{f(x)}{x}-f'(x)\right)dx=0.
\end{eqnarray}
Neglecting the term with $b^{-3/2}$ as compared to the term
$b^{-5/2}$, the equation becomes simpler
and  its solution is  equal to the optimal global bandwidth
\begin{eqnarray}\label{6}
 b_0 = \left(\frac{3\int_0^\infty x^{-3/2}f(x)dx}{\sqrt{\pi}\int_{0}^\infty
\left(\frac{f(x)}{3x^2}+f''(x)\right)^2dx}\right)^{2/7}n^{-2/7}.
\end{eqnarray}
The substitution of $b_0$ into \eqref{5} yields an optimal $MISE$ with the rate of convergence $O(n^{-\frac{4}{7}})$.
The unknown density and its second derivative in \eqref{6} were estimated by the rule of thumb method  \cite{DobrovidovMarkovich:13b}.
\par In \cite{WandJones}, p. 49, it was indicated an optimal $MISE$ of the first derivative kernel estimate $n^{-\frac{4}{7}}$ with the bandwidth of order $n^{-\frac{1}{7}}$ for symmetrical kernels. Nevertheless, our procedure achieves the same order  $n^{-4/7}$ with a bandwidth of order $n^{-\frac{2}{7}}$. Moreover, our advantage concerns the reduction of the bias of the density derivative at the zero boundary by means of asymmetric kernels.
Gamma kernels allow us to avoid boundary transformations which is especially important for multivariate cases.
\par Further results presented in Section \ref{SectMISE} will be based on Theorem \ref{Thb}.
\section{Main Results}
\subsection{Estimation of the density derivative by dependent data}\label{SectLem1}
Here, we estimate the density derivative by means of the kernel estimator \eqref{f'(x)} by dependent data.
Thus, its mean squared error is determined as
\begin{eqnarray}\label{MSE}MSE(\widehat{f'}_n(x))= (Bias(\widehat{f'}_n(x)))^2+var(\widehat{f'}_n(x)),
\end{eqnarray}
where, due to the stationarity of the process $X_i$, the variance is given by
\begin{eqnarray*}\mbox{var}(\widehat{f'}_n(x))&=&\mbox{var}\left(\frac{1}{n}\sum\limits_{i=1}^{n}K'_b(X_i)\right)=\frac{1}{n^2}
\mbox{var}\left(\sum\limits_{i=1}^{n}K'_b(X_i)\right)\\
&=&\frac{1}{n^2}\left(\sum\limits_{i=1}^{n}\mbox{var}(K'_b(X_i))+2\sum\limits_{1\leq i<j\leq n}\mbox{cov}(K'_b(X_i),K'_b(X_j))\right)\\
&=&\frac{1}{n}\mbox{var}(K'_b(X_i))+\frac{2}{n^2}\sum\limits_{1\leq i<j\leq n}\mbox{cov}(K'_b(X_i),K'_b(X_j))\\
&=&\frac{1}{n}\mbox{var}(K'_b(X_i))+\frac{2}{n}\sum\limits_{i=1}^{n-1}\left(1-\frac{i}{n}\right)\mbox{cov}(K'_b(X_1),K'_b(X_{1+i}))\\
&=&V(x)+C(x).
\end{eqnarray*}
 For simplicity we use here and further the notation $K'_{\rho_b(x),b}(t)=K'_{b}(t)$ in \eqref{f'(x)}.
\par Thus, \eqref{MSE} can be written as
\begin{eqnarray}\label{900}MSE(\widehat{f'}(x))= B(x)^2+V(x)+C(x),
\end{eqnarray}
where
\begin{eqnarray*}B(x)&=&Bias(\widehat{f'}_n(x)).\end{eqnarray*}
The bias of the estimate does not change, but the variance contains a covariance.
The next lemma is devoted to its finding.
\begin{lemma}\label{Lem1} Let
\begin{enumerate}
  \item $\{X_j\}_{j\geq1}\in\mathcal{S}(\alpha)$ and $\int\limits_1^\infty \alpha(\tau)^\upsilon d\tau<\infty,\quad 0<\upsilon<1$ hold,
  \item $f(x)$ be a twice continuously differentiable function,
  \item $b\rightarrow 0$ and $nb^{-(\upsilon+1)/2}\rightarrow \infty$ as $n\rightarrow\infty$.
\end{enumerate}
Then the covariance $C(x)$ is bounded by
\begin{eqnarray}\label{4}&&|C(x)|=\Bigg|\frac{2}{n}\sum\limits_{i=1}^{n-1}\left(1-\frac{i}{n}\right)cov(K'_{\rho_b(x),b}(X_1),K'_{\rho_b(x),b}(X_{1+i}))\Bigg|\\\nonumber
&\leq&\Bigg(2^{-\frac{\upsilon+3}{2}}\pi^{\frac{1-\upsilon}{2}}x^{-\frac{\upsilon+5}{2}}\frac{b^{-\frac{\upsilon+1}{2}}}{n}
\Bigg(b^2C_2(\upsilon,x)+bC_1(\upsilon,x)+C_3(\upsilon,x)\Bigg)^{1-\upsilon}\\\nonumber
&+&o(b^2)\Bigg)\int\limits_{1}^{\infty}\alpha(\tau)^{\upsilon}d\tau,
\end{eqnarray}
where $K'_{\rho_b(x)}$ is defined by \eqref{K'} and $C_1(\upsilon,x)$, $C_2(\upsilon,x)$ and $C_1(\upsilon,x)$ are given by \eqref{C123}.
\end{lemma}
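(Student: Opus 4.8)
The plan is to bound the lagged covariances in $C(x)$ by means of a covariance inequality for strongly mixing sequences, thereby reducing the whole estimate to a single $L^{p}$-moment of $K'_{\rho_b(x),b}(X_1)$, and then to evaluate that moment asymptotically in $b$ by exploiting the gamma-density structure of the kernel. Throughout I work in the regime $x\ge 2b$, so that $\rho_b(x)=\rho_1(x)=x/b$; the complementary range $x\in[0,2b)$ contributes only negligibly, exactly as explained in the Remark following Theorem~\ref{Thb}.

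\emph{Step 1 (reduction via mixing).} For $x\ge 2b$ the function $K'_{\rho_1(x),b}$ is bounded and continuous on $[0,\infty)$: the factor $t^{\rho_1(x)-1}$ with $\rho_1(x)\ge 2$ suppresses the logarithmic singularity of $L_1$ at the origin, and $e^{-t/b}$ controls the behaviour at infinity, so all $L^{p}$-norms of $K'_b(X_1)$ are finite. Given $0<\upsilon<1$, put $p=2/(1-\upsilon)$ (so that $1-2/p=\upsilon$) and apply a Davydov/Rio-type covariance inequality for $\alpha$-mixing random variables to obtain, uniformly in $i$,
\[
\bigl|\mathrm{cov}\bigl(K'_{\rho_b(x),b}(X_1),K'_{\rho_b(x),b}(X_{1+i})\bigr)\bigr|\le C\,\alpha(i)^{\upsilon}\,\bigl(\mathsf E\,|K'_{\rho_b(x),b}(X_1)|^{p}\bigr)^{1-\upsilon}.
\]
Inserting this into the expression \eqref{4} for $C(x)$, using $0\le 1-i/n\le 1$ and the monotonicity of $\alpha(\cdot)$ to dominate $\sum_{i\ge 1}\alpha(i)^{\upsilon}$ by a multiple of $\int_1^\infty\alpha(\tau)^{\upsilon}\,d\tau$ (finite by the first hypothesis), reduces the lemma to an asymptotic estimate, as $b\to 0$, of $\mathsf E\,|K'_{\rho_b(x),b}(X_1)|^{p}$.

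\emph{Step 2 (the moment).} Write $K'_{\rho_1(x),b}(t)=b^{-1}K_{\rho_1(x),b}(t)L_1(t)$. The key observation is that $K_{\rho_1(x),b}(t)^{p}$ equals an explicit constant times a gamma density with shape $p(\rho_1(x)-1)+1$ and scale $b/p$; this constant is evaluated by Stirling's formula applied to the ratio $\Gamma(p\rho_1(x)-p+1)/\Gamma(\rho_1(x))^{p}$ with the large argument $\rho_1(x)=x/b$. Next, the asymptotic expansion of the Digamma function, $\Psi(x/b)=\ln(x/b)-b/(2x)-b^2/(12x^2)+o(b^2)$, together with the Taylor expansion of $\ln(t/x)$ about $t=x$, shows that on the effective support of that gamma law $L_1(t)$ is centred near $0$ with natural scale $\sqrt{b/x}$. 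I would then substitute $t=x+\sqrt{bx}\,u$, approximate the gamma law by the standard Gaussian (legitimate since its shape parameter is $\asymp x/b\to\infty$), Taylor-expand $f(t)$ about $t=x$ (using the second hypothesis), and integrate term by term: the leading term involves the absolute Gaussian moment $\mathsf E|Z|^{p}=2^{p/2}\Gamma((p+1)/2)/\sqrt{\pi}$, and the $O(b)$ and $O(b^2)$ corrections bring in $f'(x)$, $f''(x)$ and the $p=2/(1-\upsilon)$–dependent quantities. Collecting the powers of $2$, $\pi$, $x$, $b$ into the prefactor and the remaining $b$-polynomial into $b^2C_2(\upsilon,x)+bC_1(\upsilon,x)+C_3(\upsilon,x)+o(b^2)$, with $C_1,C_2,C_3$ as in \eqref{C123}, and substituting back into Step 1 (raising to the power $1-\upsilon$ and merging the inequality constant with the Stirling/Gaussian constants) yields \eqref{4}; the third hypothesis, $b\to 0$ and $nb^{-(\upsilon+1)/2}\to\infty$, secures the asserted order.

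\emph{Main obstacle.} The delicate point is the treatment of $|L_1(t)|^{p}$ in Step 2: since $p=2/(1-\upsilon)$ is in general not an integer and $L_1$ changes sign inside the bulk of the gamma law, $|L_1(t)|^{p}$ admits no polynomial Taylor expansion there, so the integral cannot be handled by naive term-by-term expansion — this is precisely what forces the Gaussian rescaling and the use of absolute moments. The accompanying difficulty is to propagate the $O(b)$ and $O(b^2)$ corrections consistently through three simultaneous expansions — the Stirling expansion of the ratio of gamma functions, the Digamma expansion, and the Gaussian approximation of the concentrated gamma law — which is what determines the exact coefficients $C_1(\upsilon,x)$, $C_2(\upsilon,x)$, $C_3(\upsilon,x)$.
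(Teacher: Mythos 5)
Your Step~1 coincides with the paper's first step: the paper applies Davydov's inequality \eqref{Dovid} with $p=q=2+\delta$, $r=(2+\delta)/\delta$ and then substitutes $\upsilon=\delta/(2+\delta)$, which is exactly your choice $p=2/(1-\upsilon)$, and it likewise dominates the weighted sum over lags by $\int_1^\infty\alpha(\tau)^\upsilon d\tau$. The genuine gap is in Step~2, where you assert that your computation ``yields \eqref{4}''. It does not: carry your own plan through and the orders in $b$ come out wrong. Writing $K(t)^p=c_{p,b}\,g_p(t)$ with $g_p$ the Gamma$\bigl(p(\rho_1(x)-1)+1,\,b/p\bigr)$ density and $c_{p,b}\asymp(2\pi xb)^{-(p-1)/2}p^{-1/2}$ by Stirling, and using that on the bulk of $g_p$ one has $t=x+O(\sqrt{xb/p})$ and $L_1(t)=\ln(t/x)+\frac{b}{2x}+O(b^2)\asymp\sqrt{b/(px)}\,Z$, your absolute-moment step gives
\[
\mathsf E\,\bigl|K'_{\rho_b(x),b}(X_1)\bigr|^{p}\;\asymp\;b^{-p}\,c_{p,b}\Bigl(\frac{b}{px}\Bigr)^{p/2}f(x)\,\mathsf E|Z|^{p}\;\asymp\;b^{\frac12-p},
\]
hence $\bigl(\mathsf E|K'|^{p}\bigr)^{1-\upsilon}\asymp b^{1/p-2}=b^{-(3+\upsilon)/2}$ and a covariance bound of order $n^{-1}b^{-(3+\upsilon)/2}$. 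This is larger by a factor $b^{-1}$ than the bound $n^{-1}b^{-(\upsilon+1)/2}\bigl(b^2C_2+bC_1+C_3\bigr)^{1-\upsilon}$ you are supposed to prove, and its constants involve $\mathsf E|Z|^{p}$, which cannot be reorganized into the expressions \eqref{C123}. Note also that the weaker bound would not even be harmless downstream: $b^{-(3+\upsilon)/2}$ is not dominated by the variance term $n^{-1}b^{-3/2}$ used in Theorem~\ref{thm}, so you cannot dismiss the discrepancy as immaterial.

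The source of the mismatch is precisely the point you flag as the ``main obstacle''. The paper obtains \eqref{4} by writing $\|K'_b(X_1)\|_q^q=b^{-q}\,\mathsf E\bigl[K(\xi)^{q-1}L_1(\xi)^qf(\xi)\bigr]$ with $\xi\sim\mathrm{Gamma}(\rho_1(x),b)$, Taylor-expanding this expectation to second order about the mean $\mu=x$ using only $\mathsf E(\xi-\mu)$ and $\mathsf E(\xi-\mu)^2=xb$, and then inserting $K(x)\le(2\pi xb)^{-1/2}$ and the Digamma expansion $L_1(x)=\frac{b}{2x}+\frac{b^2}{12x^2}+o(b^2)$; the factor $\bigl(b^2C_2+bC_1+C_3\bigr)^{1-\upsilon}$ and the power $b^{-(\upsilon+1)/2}$ arise exactly from treating $L_1$ as being of size $O(b)$, i.e.\ from evaluating it (and its derivatives) at the gamma mean. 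Your route instead prices in the sign-changing fluctuations of $L_1$ of scale $\sqrt{b/x}$ over the bulk — which is exactly what the mean-value expansion discards — so the two computations cannot be reconciled by ``collecting powers''. Your obstacle analysis is in fact a fair criticism of that delicate step, but it does not produce \eqref{4}: to prove the lemma as stated you would have to reproduce the paper's expansion of $\mathsf E\bigl[K(\xi)^{q-1}L_1(\xi)^qf(\xi)\bigr]$ about $\mu=x$ and justify neglecting those fluctuation terms, whereas as written your Step~2 ends with a claim that its own computation contradicts.
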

A similar lemma was proved in \cite{Dobrovidov:12} for symmetrical kernels and not strictly positive $x$.
\subsection{Mean integrated squared error of $\hat{f'}_n(x)$}\label{SectMISE}
Using the upper bound \eqref{4} we can obtain the upper bound of the MISE and find the expression of the optimal bandwidth $b$ as the minimum of the latter.
 \begin{theorem}\label{thm}If the conditions of Theorem \ref{Thb} and Lemma \ref{Lem1} hold,
then the 
MISE expansion for the
estimate  $\hat{f'}_n(x)$ of the density derivative is equal to
\begin{eqnarray}\label{10}\nonumber &&MISE(f'(x))\leq\int\limits_0^\infty \frac{n^{-1}b^{-\frac{3}{2}}x^{-\frac{3}{2}}}{4\sqrt{\pi}}\left(f(x)+\frac{b}{2}\left(\frac{f(x)}{x}-f'(x)\right)\right)dx
\\
&+& \int\limits_{0}^{\infty}\Bigg(2^{-\frac{\upsilon+3}{2}}\pi^{\frac{1-\upsilon}{2}}x^{-\frac{\upsilon+5}{2}}\frac{b^{-\frac{\upsilon+1}{2}}}{n}
C_3(\upsilon,x)^{1-\upsilon}\Bigg)\int\limits_{1}^{\infty}\alpha(\tau)^{\upsilon}d\tau dx\nonumber \\
&+&\frac{b^2}{16}\int\limits_{0}^\infty P(x)dx+o(b^2 + n^{-1}(b^{-\frac{3}{2}})).
\end{eqnarray}
and the optimal bandwidth is $b_{opt}=o(n^{-2/7})$  and  the $MISE_{opt}=O(n^{-4/7}).$
 \end{theorem}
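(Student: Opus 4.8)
\emph{Outline of the argument.} The plan is to assemble the MISE by integrating the pointwise decomposition \eqref{900}, $MSE(\widehat{f'}_n(x))=B(x)^2+V(x)+C(x)$, over $[0,\infty)$ and to treat its three parts separately. The crucial observation is that the bias $B(x)=\mathrm{Bias}(\widehat{f'}_n(x))$ and the diagonal variance term $V(x)=n^{-1}\mathrm{var}(K'_b(X_i))$ involve only the one–dimensional marginal density $f$; hence they coincide numerically with the corresponding quantities in the i.i.d.\ setting of Theorem \ref{Thb}. Consequently, by \eqref{5},
\[
\int_0^\infty\!\big(B(x)^2+V(x)\big)\,dx
=\frac{b^2}{16}\int_0^\infty\! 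P(x)\,dx
+\int_0^\infty\!\frac{n^{-1}b^{-3/2}x^{-3/2}}{4\sqrt{\pi}}\Big(f(x)+\frac b2\Big(\frac{f(x)}{x}-f'(x)\Big)\Big)dx
+o\big(b^2+n^{-1}b^{-3/2}\big),
\]
which already supplies the first and third lines of \eqref{10}.

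It then remains to control $\int_0^\infty|C(x)|\,dx$. Inserting the bound \eqref{4} of Lemma \ref{Lem1} and using that $\big(b^2C_2(\upsilon,x)+bC_1(\upsilon,x)+C_3(\upsilon,x)\big)^{1-\upsilon}=C_3(\upsilon,x)^{1-\upsilon}+o(1)$ as $b\to0$, one arrives at
\[
\int_0^\infty|C(x)|\,dx\le
\int_0^\infty 2^{-\frac{\upsilon+3}{2}}\pi^{\frac{1-\upsilon}{2}}x^{-\frac{\upsilon+5}{2}}\frac{b^{-\frac{\upsilon+1}{2}}}{n}\,C_3(\upsilon,x)^{1-\upsilon}\,dx\int_1^\infty\alpha(\tau)^\upsilon\,d\tau+o\big(n^{-1}b^{-\frac{\upsilon+1}{2}}\big),
\]
which is exactly the second line of \eqref{10}. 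Here two things have to be verified: first, that the $x$-integral of $x^{-\frac{\upsilon+5}{2}}C_3(\upsilon,x)^{1-\upsilon}$ converges at both endpoints, which follows from the explicit form of $C_3$ in \eqref{C123} together with the integrability hypotheses of Theorem \ref{Thb} on $f$, $x^{-3/2}f(x)$ and $P(x)$; and second, that the $b$-dependent pieces hidden inside the power $(\cdot)^{1-\upsilon}$ are small enough, uniformly in $x$ on the relevant range, to be absorbed into the displayed $o(\cdot)$. Adding this to the previous display gives \eqref{10}.

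For the final assertion, note that $0<\upsilon<1$ forces $(\upsilon+1)/2<3/2$, so $n^{-1}b^{-(\upsilon+1)/2}=o\big(n^{-1}b^{-3/2}\big)$ as $b\to0$: the covariance contribution is of strictly smaller order in $b$ than the diagonal variance term and therefore does not enter the leading-order balance in \eqref{10}. Differentiating the right-hand side of \eqref{10} in $b$ and discarding lower-order terms recovers equation \eqref{b^8}, whose solution is the bandwidth $b_0\sim n^{-2/7}$ of \eqref{6}; the extra covariance term only perturbs this root by a lower-order amount, so the minimiser of \eqref{10} is again of order $n^{-2/7}$ (this is what $b_{opt}=o(n^{-2/7})$ records), and substituting it back into \eqref{10} yields $MISE_{opt}=O(n^{-4/7})$, the same rate as in the independent-data case.

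The genuine work is concentrated in the second paragraph: estimating the behaviour of $x^{-\frac{\upsilon+5}{2}}C_3(\upsilon,x)^{1-\upsilon}$ near $x=0$ and as $x\to\infty$ so that its integral is finite, and justifying that the $b^2$- and $b$-terms inside $(b^2C_2+bC_1+C_3)^{1-\upsilon}$ may be dropped, which is slightly less automatic than in the density case because of the concavity of $u\mapsto u^{1-\upsilon}$. Everything else is bookkeeping on top of Theorem \ref{Thb} and Lemma \ref{Lem1}.
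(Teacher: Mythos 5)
Your proposal is correct and follows essentially the same route as the paper's proof: integrate the bias--variance part via Theorem \ref{Thb} (formula \eqref{5}), bound the integrated covariance by Lemma \ref{Lem1} with the $b$-dependent terms inside $(\cdot)^{1-\upsilon}$ absorbed into the remainder, observe that the covariance contribution of order $n^{-1}b^{-(\upsilon+1)/2}$ is negligible against $n^{-1}b^{-3/2}$ (the paper makes the same comparison after differentiating in $b$), and recover $b\sim n^{-2/7}$ and $MISE_{opt}=O(n^{-4/7})$. The integrability caveats you flag are in fact left implicit in the paper as well, so there is no substantive divergence.
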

 \begin{remark}It is evident from the formula \eqref{10} that the term responsible for the covariance has the order $\frac{b^{-\frac{\upsilon+1}{2}}}{n}$, $0<\upsilon<1$.
 Thus, it does not influence the order of MISE irrespective of the mixing coefficient $\alpha(\tau)$.
 \end{remark}
 The proof is given in Appendix \ref{ap2}.
\subsection{Example of a strong mixing process}\label{SectAR}
We use the first-order autoregressive process as an example of a process that satisfies Theorem \ref{Thb}.
$X_i$ determines a first-order autoregressive (AR(1)) process with the innovation r.v. $\epsilon_0$ and the autoregressive parameter $\rho\in(-1,1)$ if
\begin{eqnarray}&&\label{15} X_i=\rho X_{i-1}+\epsilon_i,\quad i=\ldots-1, 0, 1,\ldots,
\end{eqnarray}
holds and $\epsilon_i$ is a sequence of i.i.d r.v.s
Let AR(1) process \eqref{15} be strong mixing with mixing numbers $\alpha(\tau)$, $\tau=1,2,\ldots$
\begin{eqnarray}\label{alpha}
\alpha(\tau)\leq\widetilde{\alpha}(\tau)&\equiv& \left\{
\begin{array}{ll}
2(C+1)\mathsf E|X_i|^\nu|\rho^\nu|^\tau, &   \mbox{if}\qquad \tau\geq\tau_0,
\\
1, & \mbox{if}\qquad 1\leq\tau<\tau_0,
\end{array}
\right.
\end{eqnarray}
where $\nu=\min\{p,q,1\}$ and  $ p>0, q>0, C>0, \tau_0>0$ hold.
In \cite{Donald:83} it was proved that with some conditions AR(1) is a strongly mixing process.
\par In Appendix \ref{ap3} we prove the following lemma.
\begin{lemma}\label{lem3}
Under the conditions \eqref{alpha} the AR(1) process \eqref{15} satisfies Lemma \ref{Lem1} and Theorem \ref{thm}.
\end{lemma}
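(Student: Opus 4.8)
The plan is to verify that the AR(1) process satisfies each hypothesis of Lemma \ref{Lem1} and Theorem \ref{thm} in turn, the only nontrivial one being the summability/integrability of the mixing coefficients, namely $\int_1^\infty \alpha(\tau)^\upsilon d\tau < \infty$ for some $\upsilon \in (0,1)$. First I would recall that stationarity of the AR(1) process \eqref{15} with $|\rho| < 1$ is classical (it is the causal solution $X_i = \sum_{k\ge 0}\rho^k \epsilon_{i-k}$), so the strong stationarity required in the theoretical background holds; similarly, the twice continuous differentiability of $f$ is a regularity assumption on the innovation density that we simply impose, and the bandwidth conditions $b \to 0$, $nb^{-(\upsilon+1)/2}\to\infty$ are conditions on the estimator, not on the process, hence automatically available. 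So the real content is the mixing condition.

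The key step is to exploit the bound \eqref{alpha}: for $\tau \ge \tau_0$ we have $\alpha(\tau) \le 2(C+1)\,\mathsf E|X_i|^\nu\,|\rho|^{\nu\tau}$, where the moment $\mathsf E|X_i|^\nu$ is finite (this is guaranteed under the hypotheses on $p,q$ that appear in \cite{Donald:83}, since $\nu = \min\{p,q,1\}$ is chosen precisely so that the relevant moments exist). Writing $M := 2(C+1)\mathsf E|X_i|^\nu$ and $r := |\rho|^\nu \in (0,1)$, I would estimate, for any $\upsilon \in (0,1)$,
\begin{eqnarray*}
\int_1^\infty \alpha(\tau)^\upsilon\, d\tau
&=& \int_1^{\tau_0}\alpha(\tau)^\upsilon\,d\tau + \int_{\tau_0}^\infty \alpha(\tau)^\upsilon\,d\tau
\le (\tau_0 - 1) + M^\upsilon\int_{\tau_0}^\infty r^{\upsilon\tau}\,d\tau \\
&=& (\tau_0 - 1) + M^\upsilon\,\frac{r^{\upsilon\tau_0}}{\upsilon\,|\ln r|} < \infty,
\end{eqnarray*}
using $\alpha(\tau) \le 1$ on $[1,\tau_0)$ and the geometric (exponential) decay on $[\tau_0,\infty)$. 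This shows the integral converges for every $\upsilon \in (0,1)$, in particular for the $\upsilon$ appearing in Lemma \ref{Lem1}; the exponential mixing rate of AR(1) is far stronger than the polynomial integrability actually needed, so no extra conditions are required, which is exactly the assertion of the lemma.

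With hypothesis (1) of Lemma \ref{Lem1} established and hypotheses (2)–(3) being assumptions carried over verbatim, Lemma \ref{Lem1} applies to the AR(1) process, yielding the covariance bound \eqref{4}. Since Theorem \ref{thm} was proved under exactly the combined hypotheses of Theorem \ref{Thb} and Lemma \ref{Lem1}, and Theorem \ref{Thb}'s integrability conditions ($\int_0^\infty P(x)dx$ finite and nonzero, $\int_0^\infty x^{-3/2}f(x)dx$ finite) are again regularity conditions on $f$ that we assume, Theorem \ref{thm} holds for AR(1) as well, giving $b_{opt} = o(n^{-2/7})$ and $MISE_{opt} = O(n^{-4/7})$. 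I expect the only genuine obstacle — and it is a mild one — to be bookkeeping the moment condition: one must check that $\mathsf E|X_i|^\nu < \infty$ follows from the standing assumptions on $p, q$ (via $\nu = \min\{p,q,1\} \le 1$ and the representation of $X_i$ as an a.s. convergent linear process in the $\epsilon$'s, plus a moment assumption on $\epsilon_0$), after which the geometric-series estimate above is routine. I would present the moment finiteness as a consequence of the cited result \cite{Donald:83} together with the hypothesis $\mathsf E|\epsilon_0|^\nu<\infty$, and then the integral bound closes the argument.
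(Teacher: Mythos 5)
Your proposal is correct and follows essentially the same route as the paper's own proof: only condition (1) of Lemma \ref{Lem1} requires work, and it is verified by splitting the integral at $\tau_0$, bounding $\alpha(\tau)\le 1$ below $\tau_0$ and using the geometric decay $|\rho^\nu|^{\tau\upsilon}$ above $\tau_0$, exactly as in \eqref{17}--\eqref{19}. Your extra remarks on the finiteness of $\mathsf E|X_i|^\nu$ and the case $\rho=0$ only make explicit what the paper leaves implicit.
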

\section{Simulation results}\label{Sim}
To investigate the performance of the gamma-kernel estimator we select the following positive defined pdfs: the
 Maxwell ($\sigma=2$), the Weibull ($a =1, b=4$) and the Gamma ($\alpha=2.43,\beta=1$) pdf,
 \begin{eqnarray*}
f_M(x)&=&\frac{\sqrt{2}x^2\exp(-x^2/2\sigma^2)}{\sigma^3\sqrt{\pi}},\\
f_W(x)&=&sx^{s-1}\exp(-x^s),\\
f_G(x)&=&\frac{x^{\alpha-1}\exp(-x/\beta)}{\beta^{\alpha}\Gamma(\alpha)}.
 \end{eqnarray*}
Their derivatives
 \begin{eqnarray}\label{25}
f'_M(x)&=&-\frac{\sqrt{2}x\exp(-x^2/2\sigma^2)(x^2-2\sigma^2)}{\sigma^5\sqrt{\pi}},\nonumber\\
f'_W(x)&=&-sx^{s-2}\exp(-x^s)(sx^s-s+1),\\
f'_G(x)&=&\frac{x^{\alpha-2}\exp(-x/\beta)(\beta+x-\alpha\beta)}{\beta^{\alpha+1}\Gamma(\alpha)}\nonumber
 \end{eqnarray}
are to be estimated. The Weibull and the Gamma pdfs are frequently used in a wide range of applications in engineering, signal processing, medical research, quality control, actuarial science and climatology among others. For example, most total insurance claim distributions are shaped like gamma pdfs \cite{Furman}. The gamma distribution is also used to model rainfalls \cite{Aksoy}. Gamma class pdfs, like Erlang and $\chi^2$ pdfs are widely used in modeling insurance portfolios \cite{Hurlimann}.
\par We generate Maxwell, Weibull  and Gamma i.i.d samples
with sample sizes $n\in\{100, 500, 1000 , 2000\}$ using standard Matlab generators.  To get the dependent data we generate Markov chains with the same stationary distributions using the Metropolis - Hastings algorithm \cite{Metro}. Due to the existence of the probability of rejecting a move from the previous point to the next one, the variance of such Markov sequence
$\{X_t\}$ is corrupted by the function of the latter rejecting probability (see \cite{Skold}, Theorem 3.1). The Metropolis-Hastings Markov chains \cite{Metro} are geometrically ergodic for the underlying light-tailed distributions. Hence, they satisfy the strong mixing condition \cite{Roberts}.
\par The gamma kernel estimates \eqref{2} with the optimal bandwidth \eqref{6} for the derivatives \eqref{25} can be seen in Figures~\ref{Fig1} - \ref{Fig3}. The optimal bandwidth \eqref{6} is counted for every replication of the simulation using the rule of thumb method, where as a reference density we take the gamma pdf.
\begin{figure}[Ht]
\begin{center}
\begin{minipage}[Ht]{0.49\linewidth}
\includegraphics[width=1\linewidth]{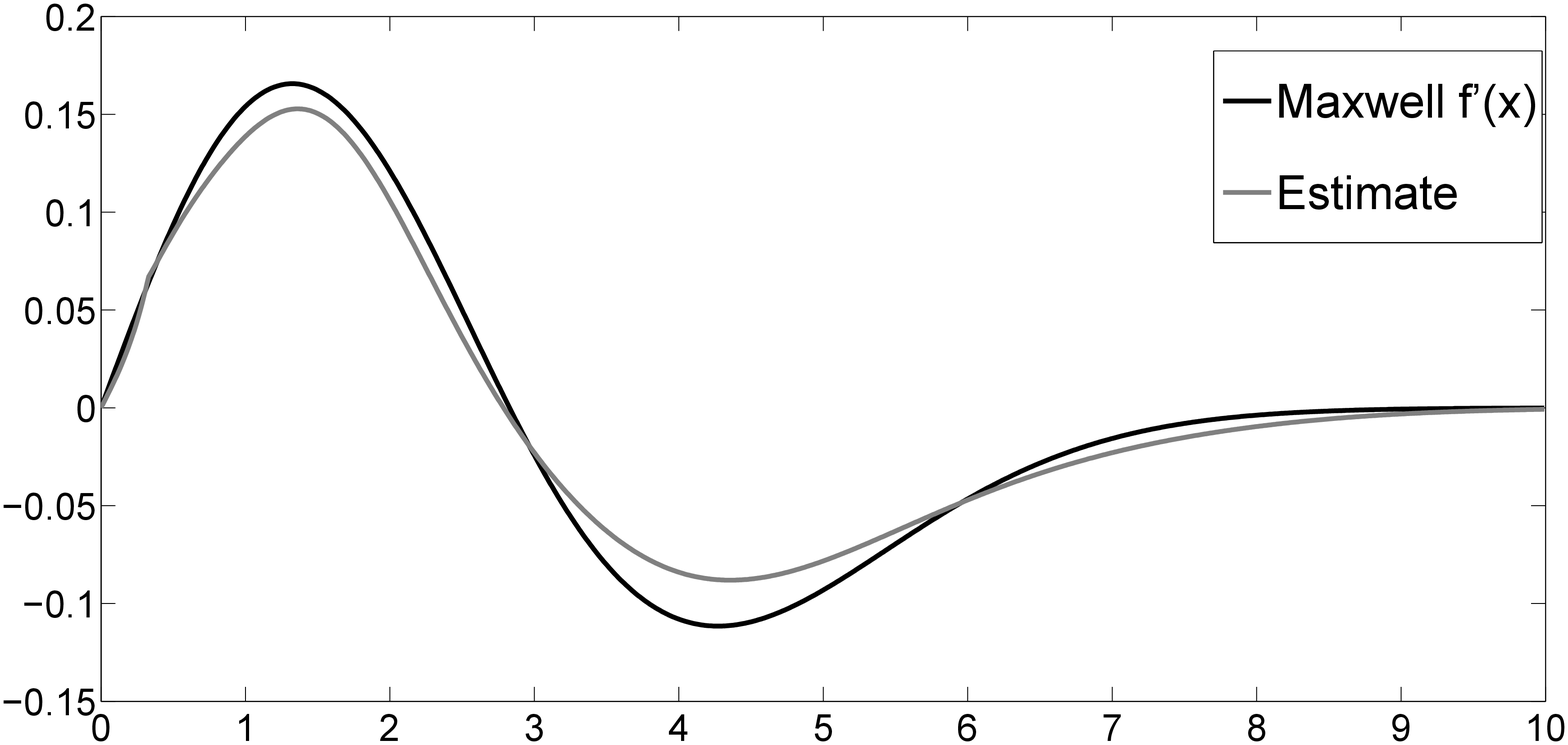}
\vspace{-4mm}
\end{minipage}
\hfill
\begin{minipage}[Ht]{0.49\linewidth}
\includegraphics[width=1\linewidth]{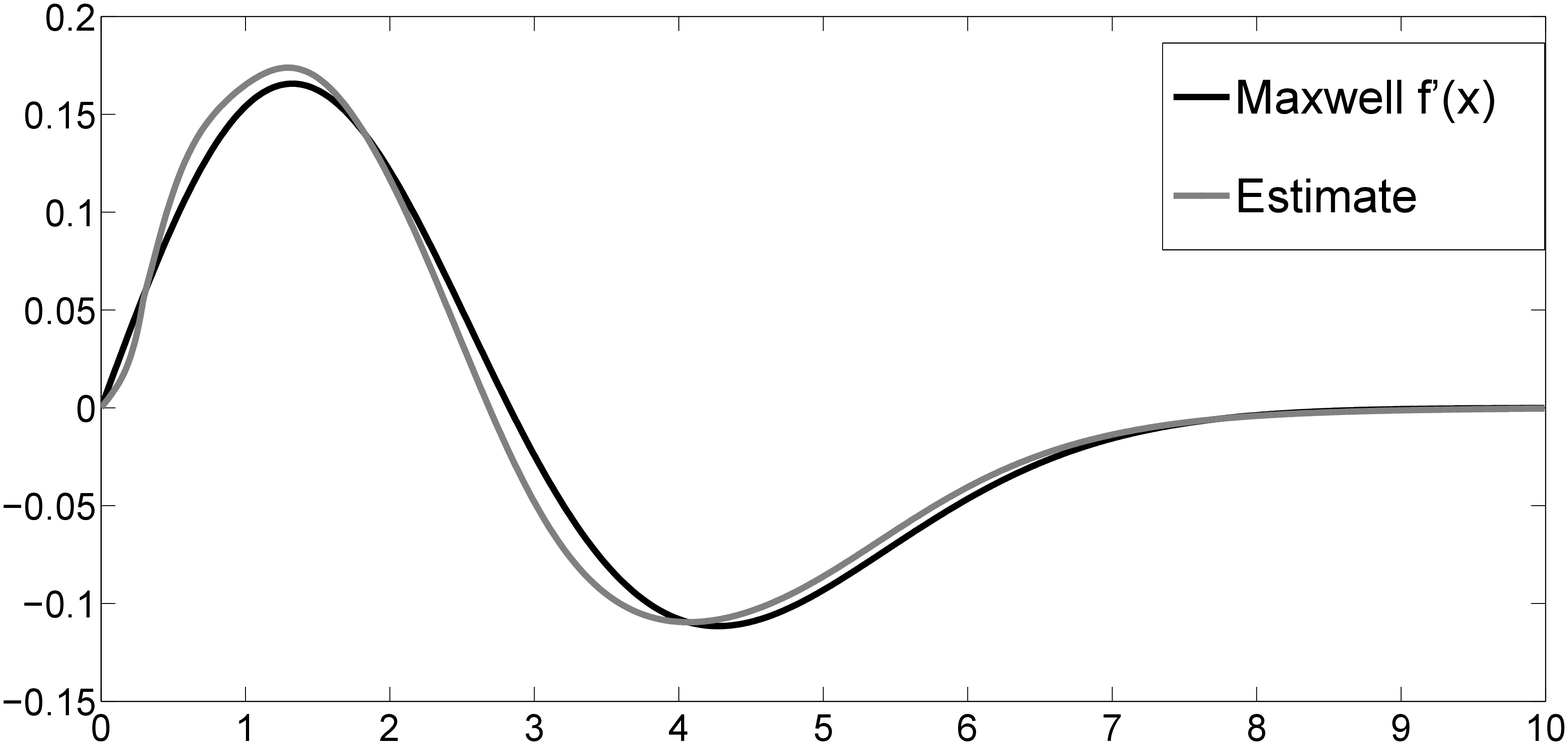}
\vspace{-4mm}
\end{minipage}
 \caption{Estimates of the Maxwell pdf derivative by i.i.d data (left) and by dependent data (right): the $f'_M(x)$ (black line),  gamma kernel estimate  from
the rule of thumb (grey  line) for the sample size $n = 2000$.\label{Fig1}}
\end{center}
\end{figure}
\begin{figure}[Ht]
\begin{center}
\begin{minipage}[Ht]{0.49\linewidth}
\includegraphics[width=1\linewidth]{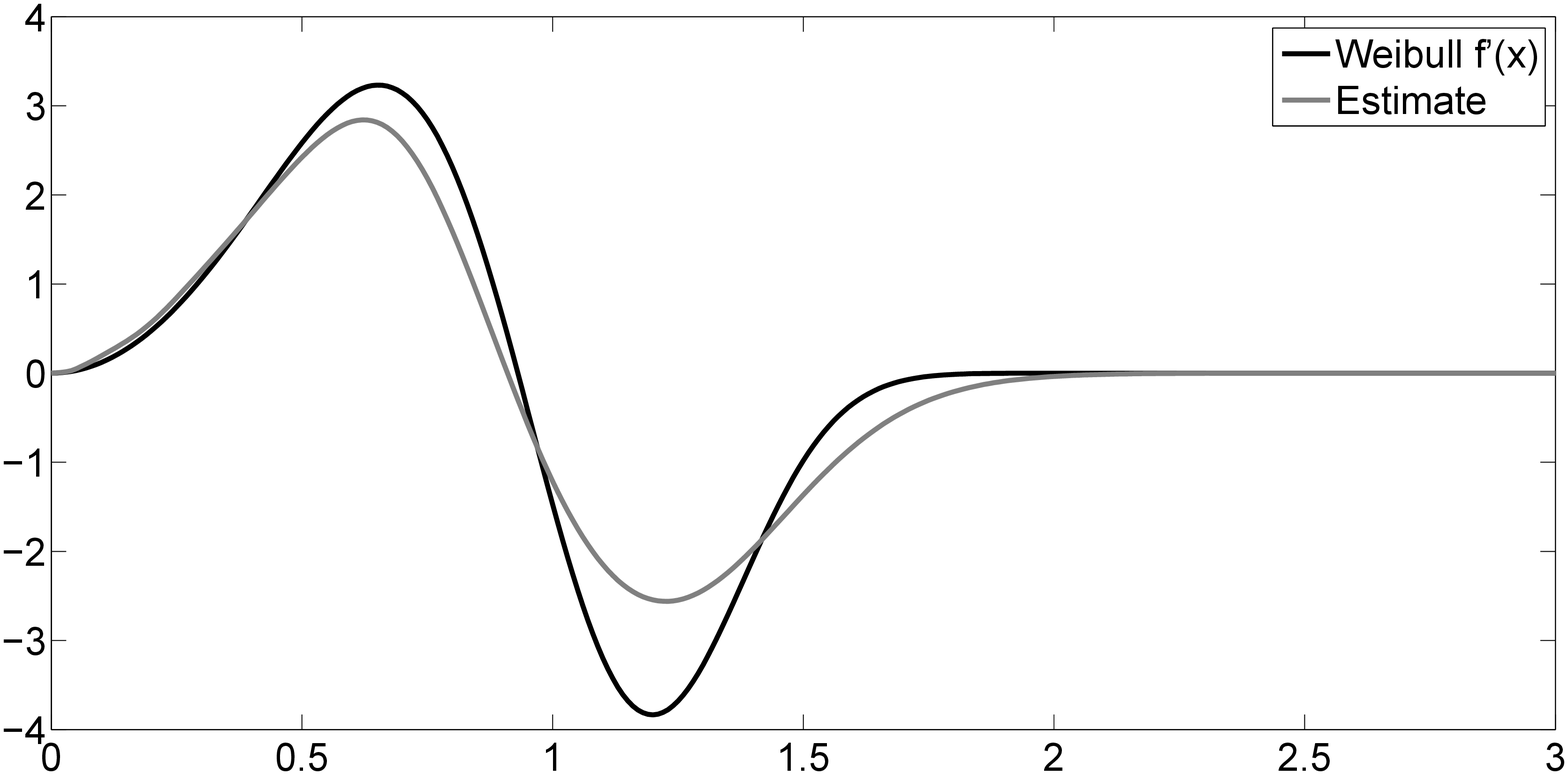}
\vspace{-4mm}
\end{minipage}
\hfill
\begin{minipage}[Ht]{0.49\linewidth}
\includegraphics[width=1\linewidth]{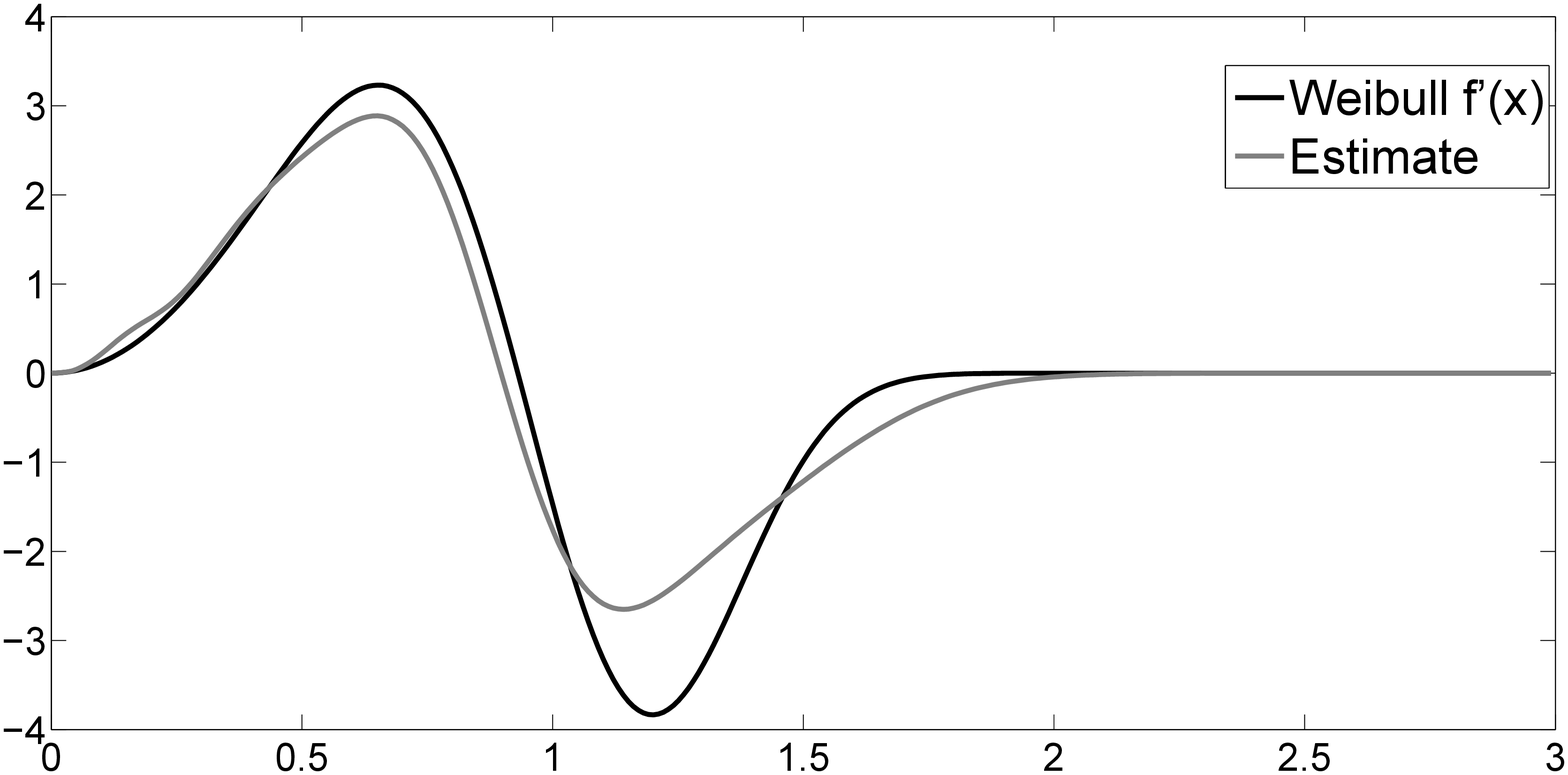}
\vspace{-4mm}
\end{minipage}
 \caption{Estimates of the Weibull pdf derivative by i.i.d data (left) and by dependent data (right): the $f'_W(x)$ (black line),  gamma kernel estimate  from
the rule of thumb (grey  line) for the sample size $n = 2000$.\label{Fig2}}
\end{center}
\end{figure}
\begin{figure}[Ht]
\begin{center}
\begin{minipage}[Ht]{0.49\linewidth}
\includegraphics[width=1\linewidth]{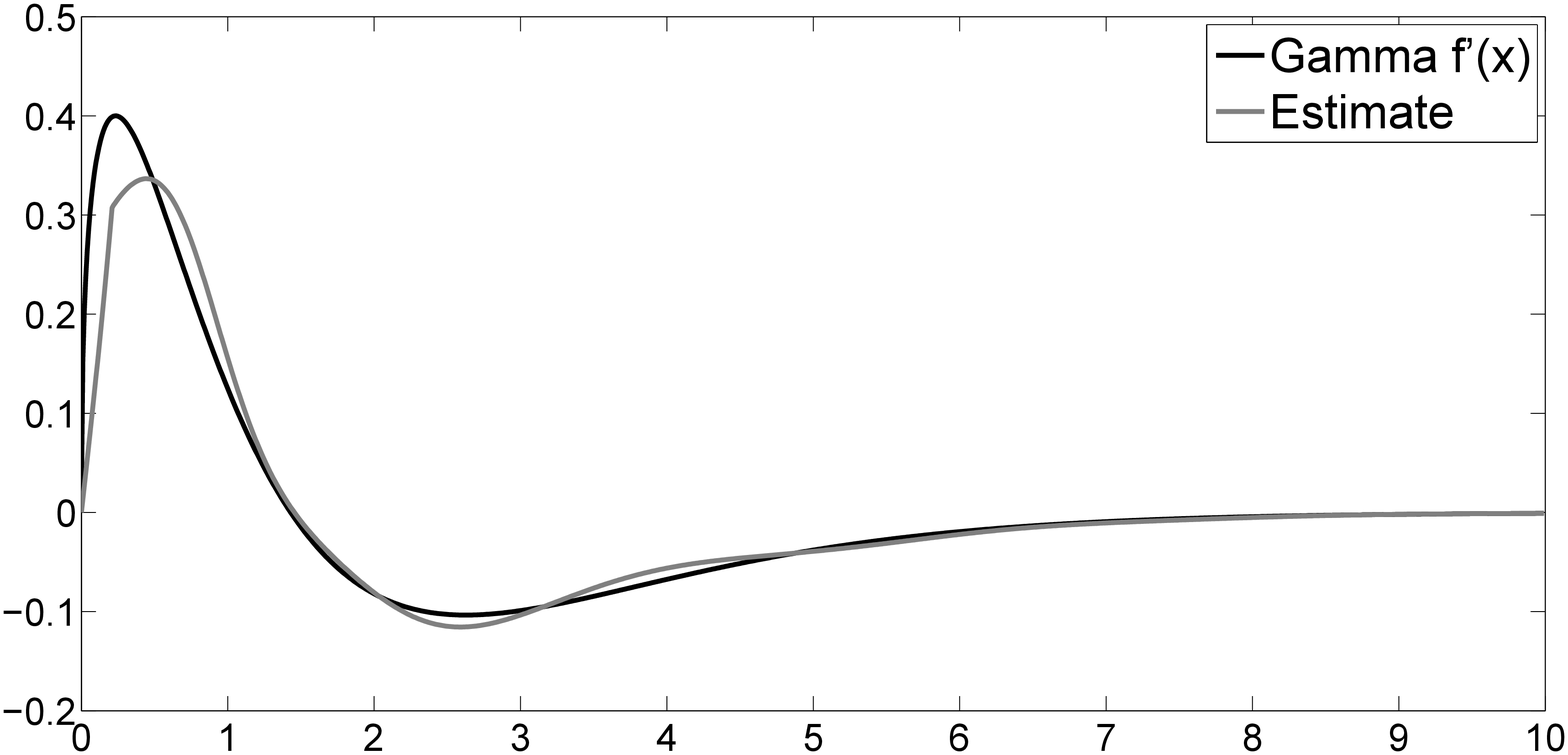}
\vspace{-4mm}
\end{minipage}
\hfill
\begin{minipage}[Ht]{0.49\linewidth}
\includegraphics[width=1\linewidth]{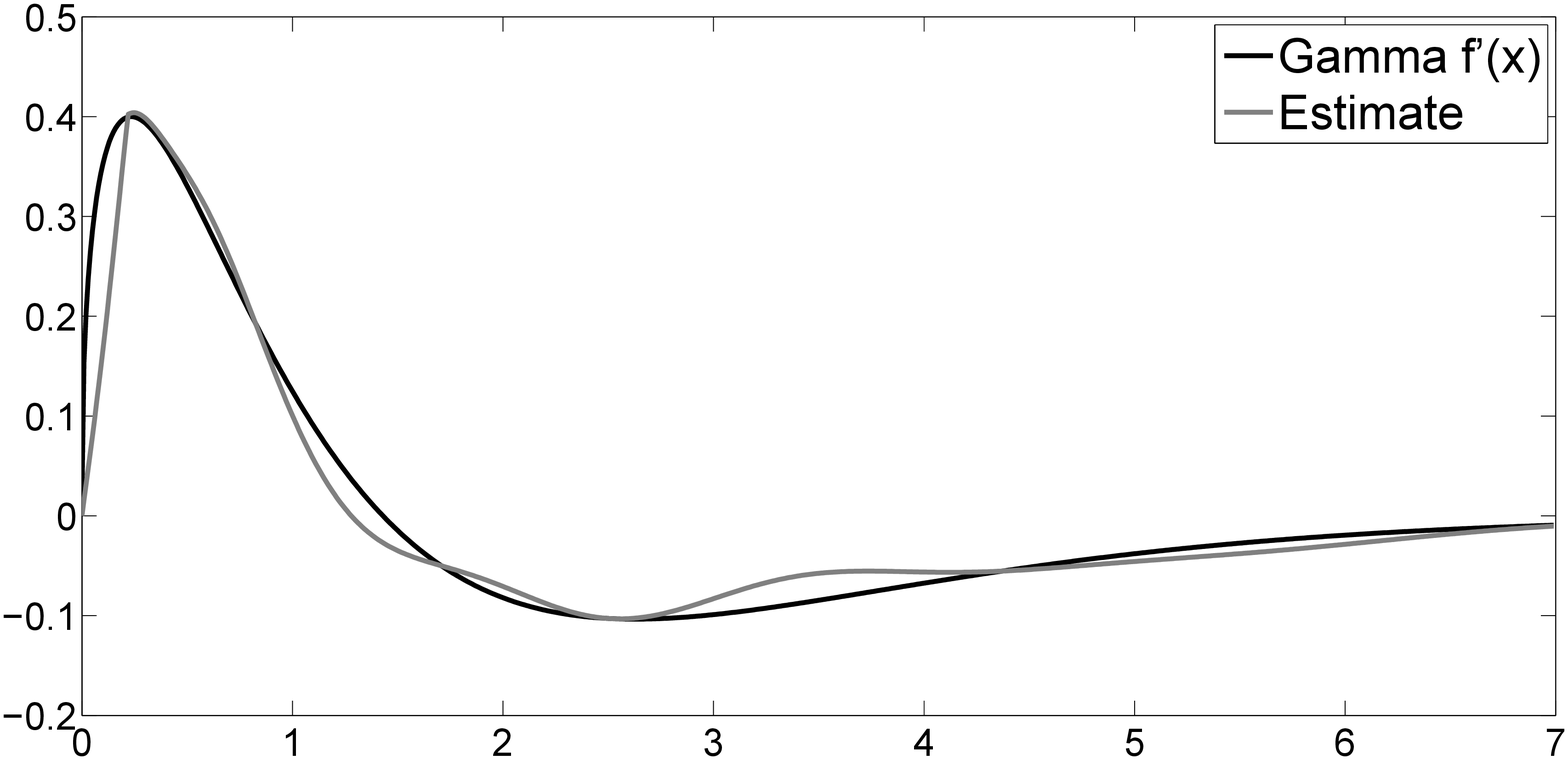}
\vspace{-4mm}
\end{minipage}
 \caption{Estimates of the Gamma pdf derivative by i.i.d data (left) and by dependent data (right): the $f'_G(x)$ (black line),  gamma kernel estimate  from
the rule of thumb (grey  line) for the sample size $n = 2000$.\label{Fig3}}
\end{center}
\end{figure}
The estimation error of the pdf derivative is calculated by the following formula
\begin{eqnarray*}
m=\int\limits_0^\infty(f'(x)-\hat{f}'(x))^2dx,
\end{eqnarray*}
where $f'(x)$ is a true derivative and $\hat{f}'(x)$ is its estimate. Values of $m'$s averaged over $500$ simulated samples and the standard deviations for the underlying distributions are given in Table~\ref{Tab1} for i.i.d r.v.s and in Table~\ref{Tab2} for dependent data.
\begin{table}[h]
\centering
\begin{tabular}{|c|c|c|c|c|}
  \hline
  n & 100 & 500 & 1000 & 2000 \\
  \hline
  Gamma & 0.032792 & 0.015208  &  0.010675  & 0.0074668  \\
   & (0.011967) & (0.0044094) & (0.0027815) & (0.0016452) \\
  \hline
  Weibull & 2.0056  & 1.1987  &  0.9157  &  0.69155   \\
  &  (0.52931) & (0.25172)& (0.18333)  &  (0.12178)   \\
  \hline
 Maxwell &   0.0077597  & 0.0035692 & 0.0028675  &  0.0020923\\
    &  (0.0033915) & (0.0015351) & (0.00099263)  & (0.00068739)\\
  \hline
\end{tabular}
\caption{Mean errors $m$ and standard deviations for i.i.d r.v.s \label{Tab1}}
\end{table}
\begin{table}[h]
\centering
\begin{tabular}{|c|c|c|c|c|}
  \hline
  n & 100 & 500 & 1000 & 2000 \\
  \hline
  Gamma &  0.039226  & 0.018124  & 0.01252   &  0.0086675    \\
  &  (0.015824) &  (0.006055) &  (0.0038485)  &  (0.0023361)   \\
  \hline
  Weibull & 2.2052 & 1.3009   & 0.97509  & 0.75382  \\
 &  (1.1585) & (0.5957)  & (0.41041) &  (0.28755)  \\
  \hline
 Maxwell & 0.0077694  & 0.0039277   &  0.002878  & 0.0027313   \\
 &  (0.006793) &  (0.0028336) &   (0.0020021)  &  (0.0016573)  \\
  \hline
\end{tabular}
\caption{Mean errors $m$ and standard deviations for strong mixed r.v.s \label{Tab2}}
\end{table}
As expected, the mean error and the standard deviation decrease when the sample size rises, and
this holds both for i.i.d and the dependent case. The performance of the
gamma kernel changes when dependence is introduced, but the results in both tables are close. The mean errors are very close due to the fact the bandwidth parameter is selected to minimize this error. However, the standard deviations for the dependent data are higher than for the i.i.d r.v.s. For example, for the sample size of $500$ the mean errors and the standard deviations for the Maxwell pdf for the i.i.d r.v.s are $0.0035692$ $(0.0015351)$ and for dependent r.v.s $0.0039277$ $(0.0028336)$.
They differ due to the contribution of the Metropolis-Hastings rejecting probability. This difference is less pronounced for larger sample sizes.
\par The Metropolis-Hastings algorithm gives opportunity to generate AR processes  with known pdfs. As a consequence we know their derivatives and can find mean errors and
standard deviations of the gamma-kernel density derivatives estimates for the dependent data. In the case when we consider the noise distribution  $\{\epsilon\}$
of the AR model \eqref{15} and the autoregressive parameter $\rho$ that influences on the dependence rate \eqref{alpha}, we cannot indicate in general the true pdf of the process.
Hence, we consider the histogram based on $200000$ observations as a true pdf.
As the noise distribution $\{\epsilon\}$ let us take the Gamma distribution ($\alpha=1.5,\beta=1$) and the Maxwell distribution ($\sigma=1$). In
\cite{TaufikBouezmarnia:Rom2} it was proved that, as in the i.i.d case, the gamma-kernel estimator of the pdf achieves the same optimal rate of convergence
in terms of the mean integrated squared error as for strongly mixed r.v.s.
For the various parameters $\rho\in\{0.1, 0.2, 0.3, 0.4\}$ the gamma estimates for the densities of the AR models are given in Figures~\ref{Fig4}-\ref{Fig5}.
\begin{figure}[Ht]
\begin{center}
\begin{minipage}[Ht]{0.49\linewidth}
\includegraphics[width=1\linewidth]{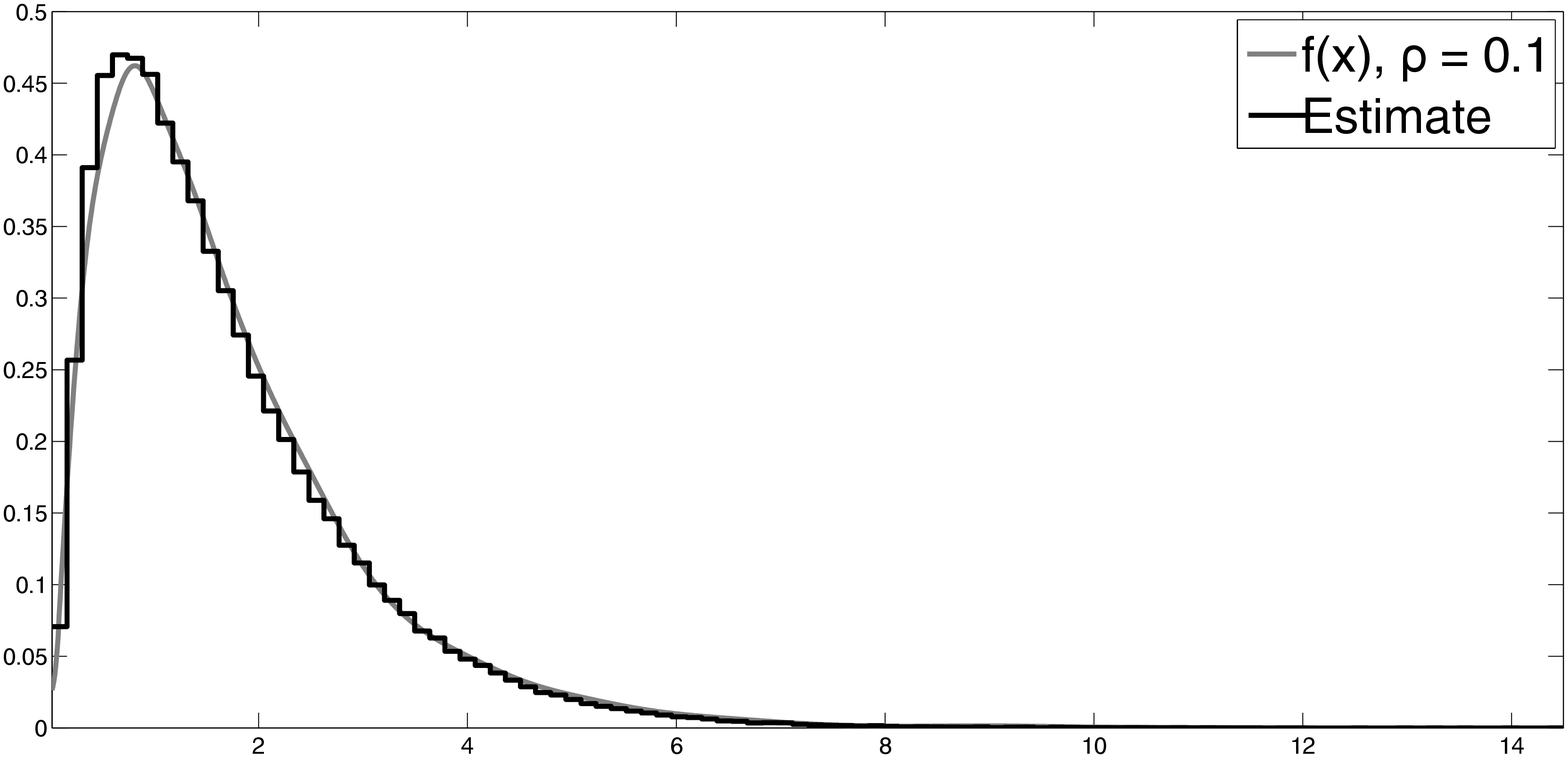}
\vspace{-4mm}
\end{minipage}
\hfill
\begin{minipage}[Ht]{0.49\linewidth}
\includegraphics[width=1\linewidth]{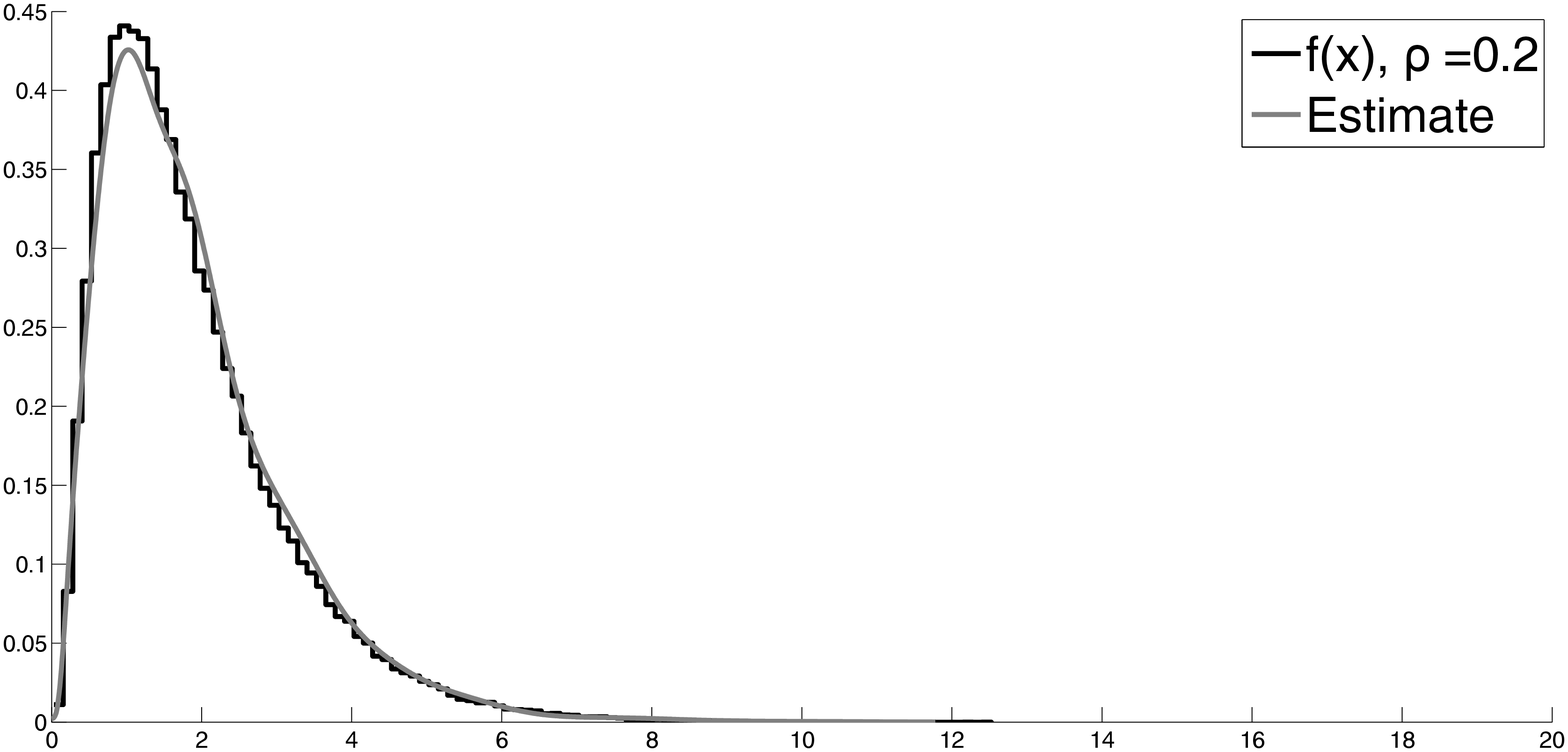}
\vspace{-4mm}
\end{minipage}
\vfill
\begin{minipage}[Ht]{0.49\linewidth}
\includegraphics[width=1\linewidth]{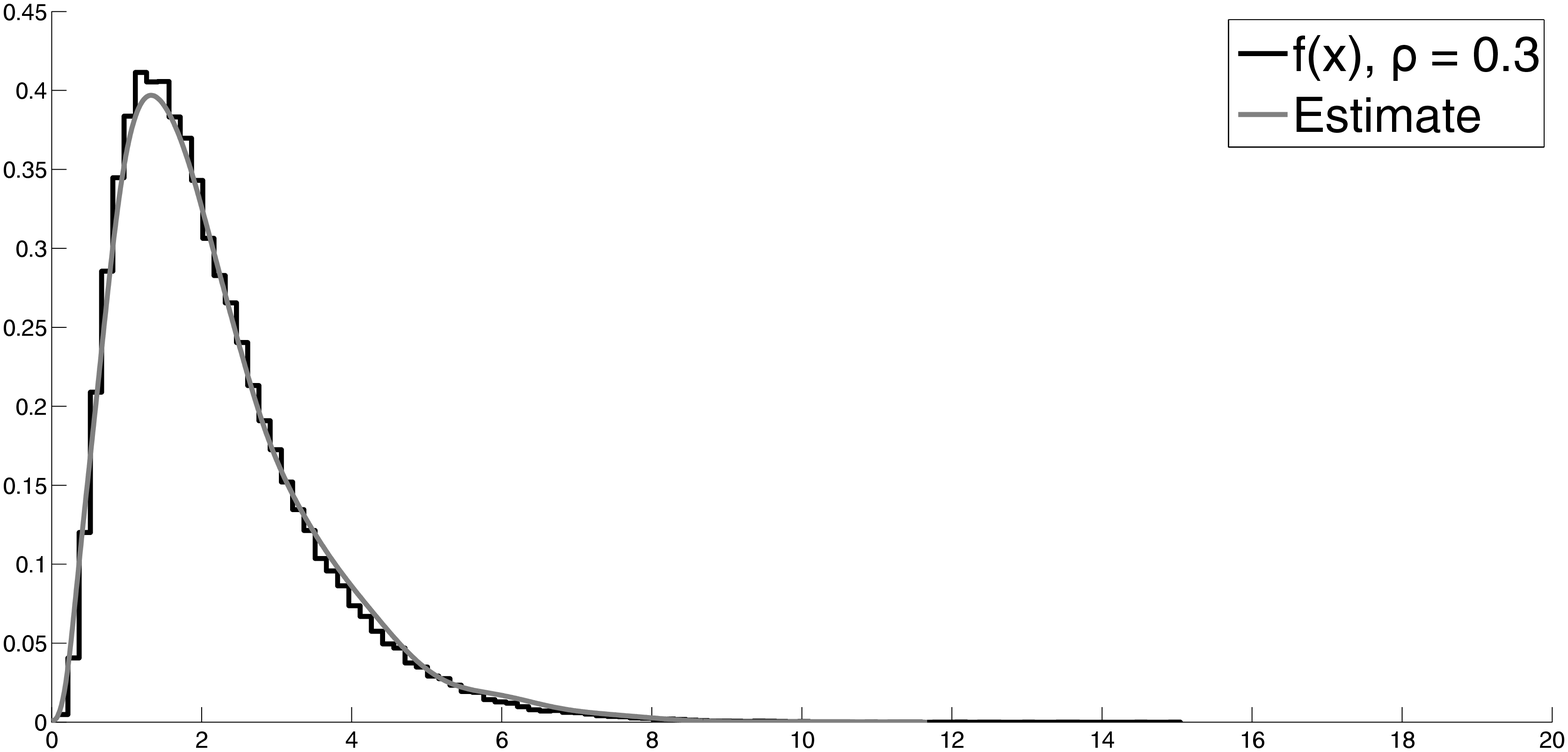}
\vspace{-4mm}
\end{minipage}
\hfill
\begin{minipage}[Ht]{0.49\linewidth}
\includegraphics[width=1\linewidth]{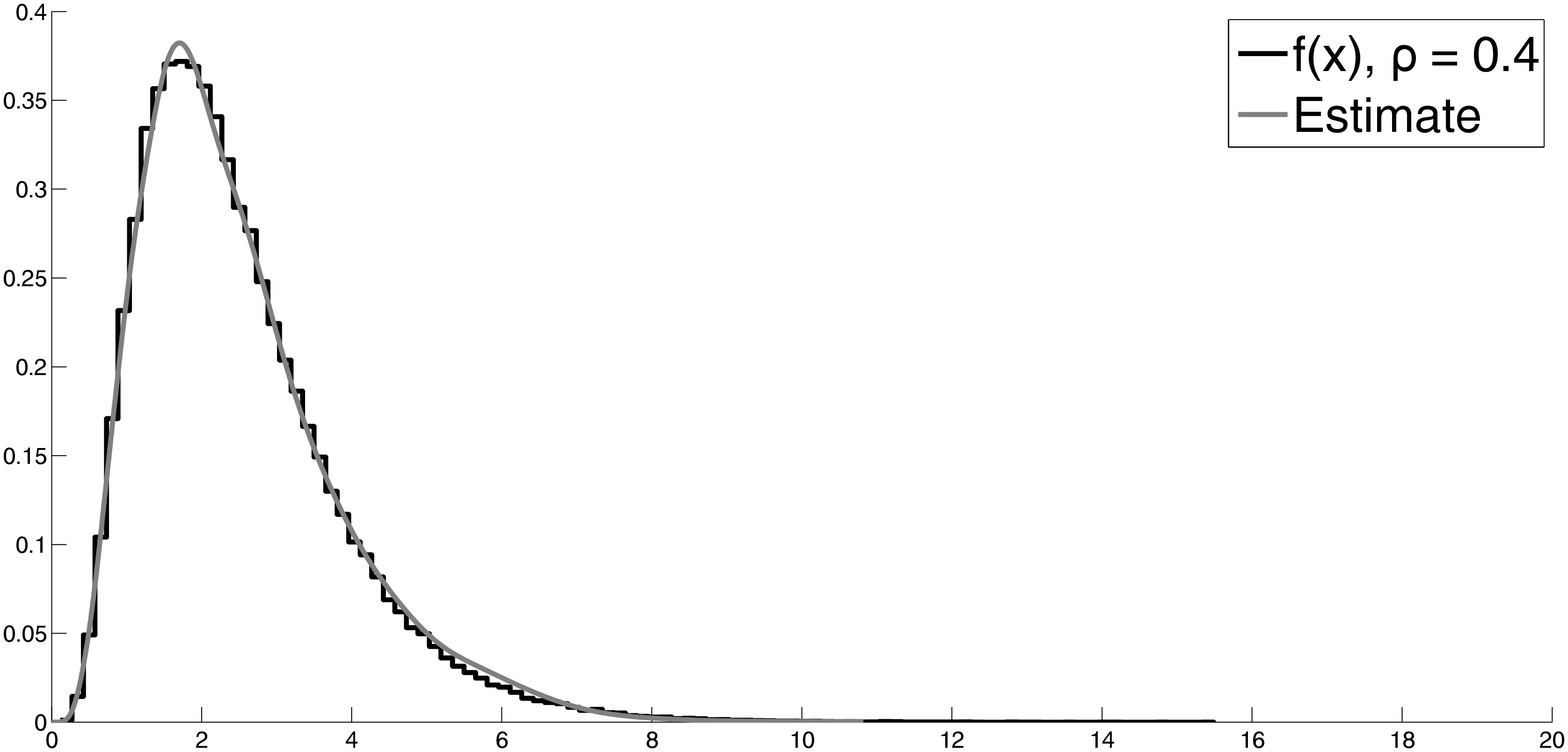}
\vspace{-4mm}
\end{minipage}
 \caption{Gamma-kernel estimates of the pdf of the AR model with the Gamma noise and  $\rho\in\{0.1, 0.2, 0.3, 0.4\}$ for the sample size $n = 2000$.\label{Fig4}}
\end{center}
\end{figure}
\begin{figure}[Ht]
\begin{center}
\begin{minipage}[Ht]{0.49\linewidth}
\includegraphics[width=1\linewidth]{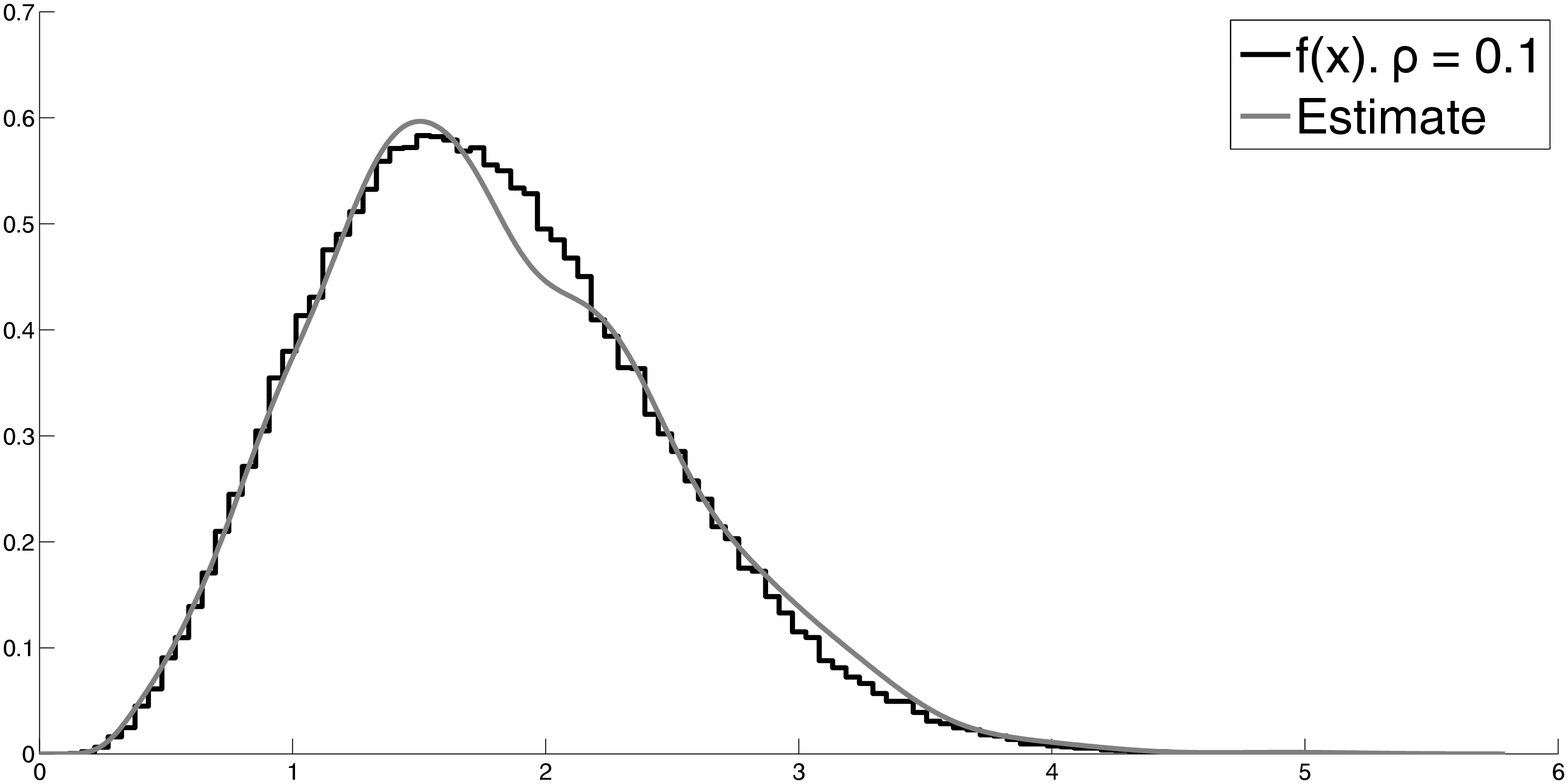}
\vspace{-4mm}
\end{minipage}
\hfill
\begin{minipage}[Ht]{0.49\linewidth}
\includegraphics[width=1\linewidth]{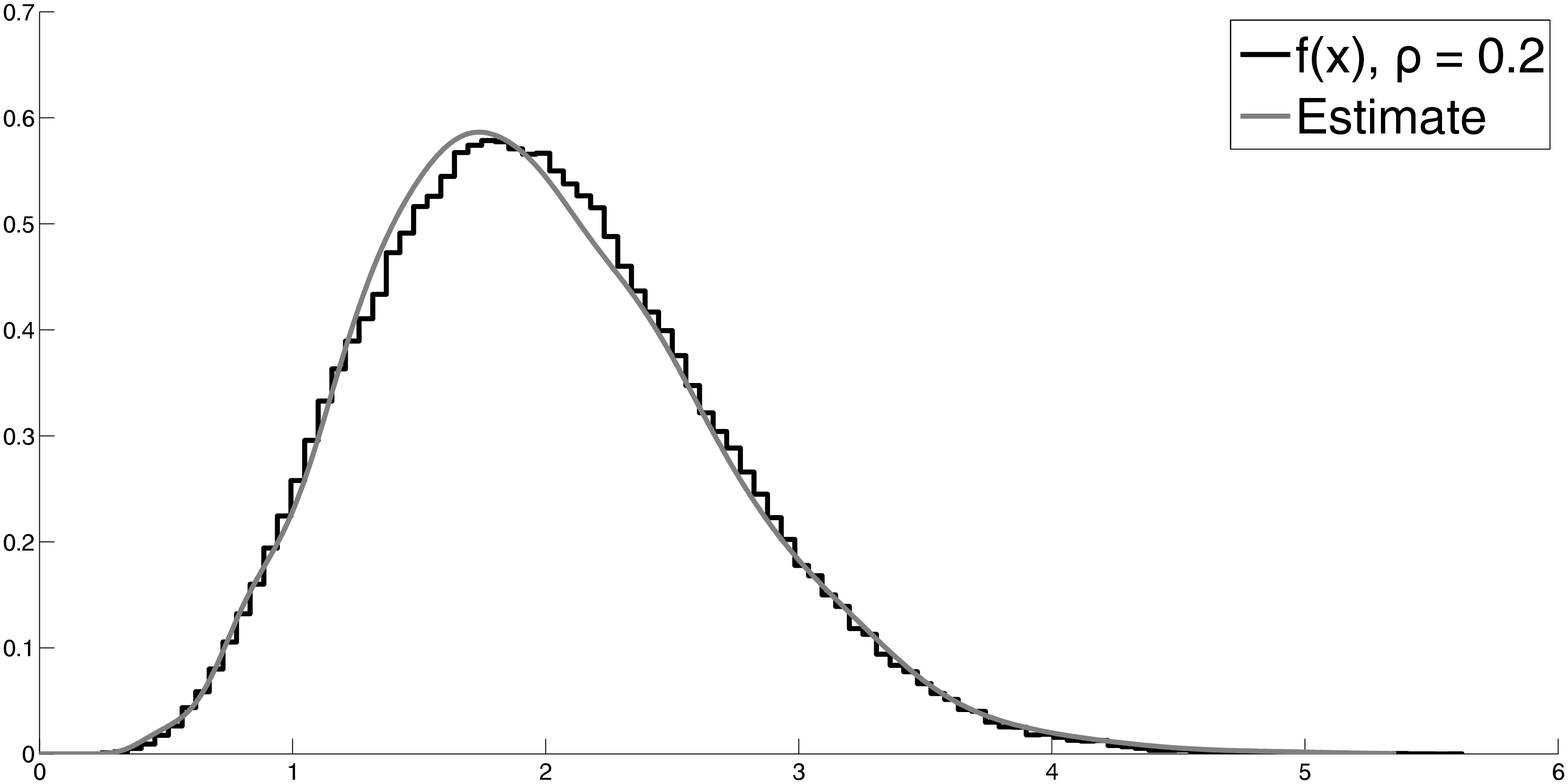}
\vspace{-4mm}
\end{minipage}
\vfill
\begin{minipage}[Ht]{0.49\linewidth}
\includegraphics[width=1\linewidth]{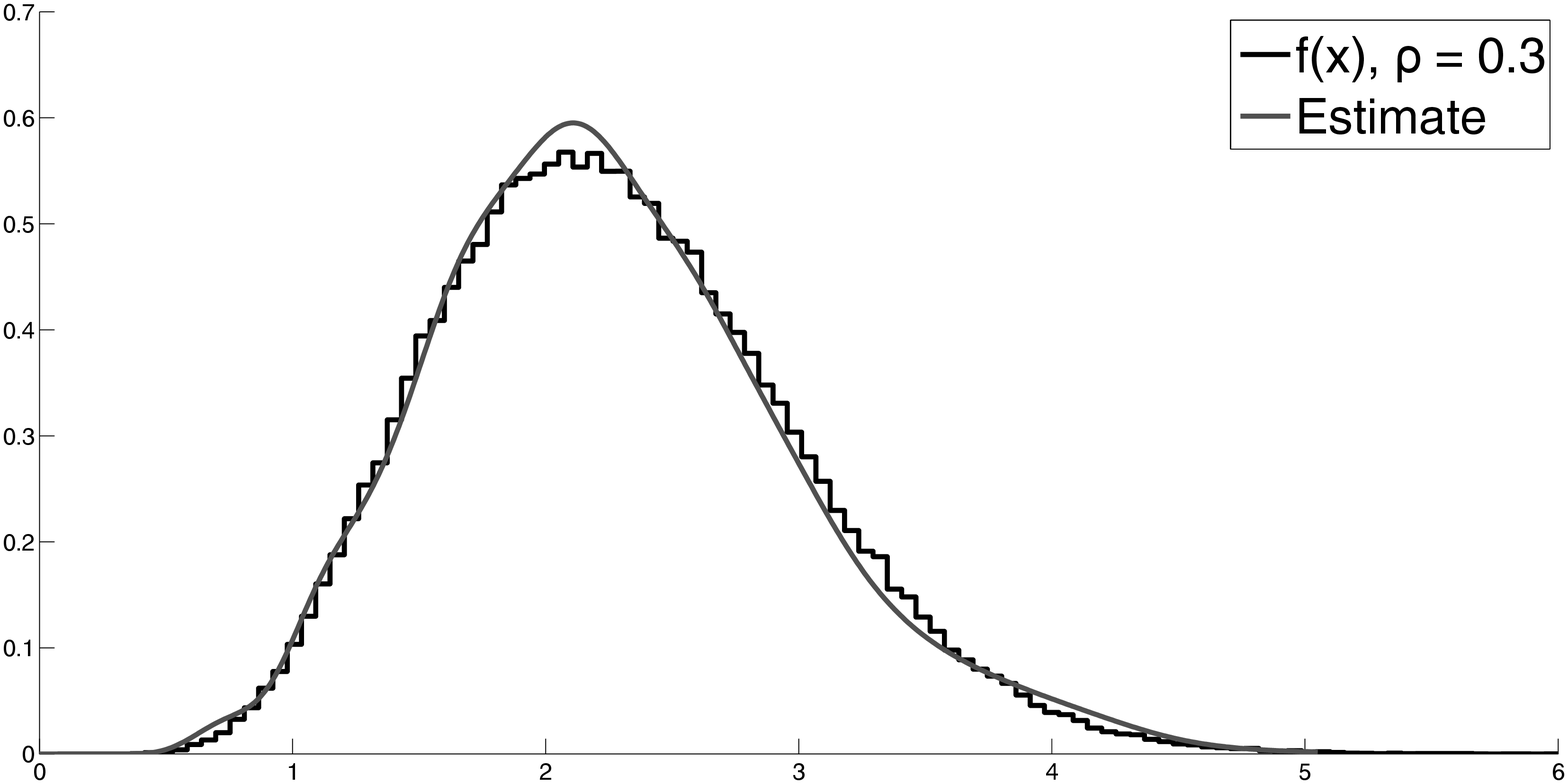}
\vspace{-4mm}
\end{minipage}
\hfill
\begin{minipage}[Ht]{0.49\linewidth}
\includegraphics[width=1\linewidth]{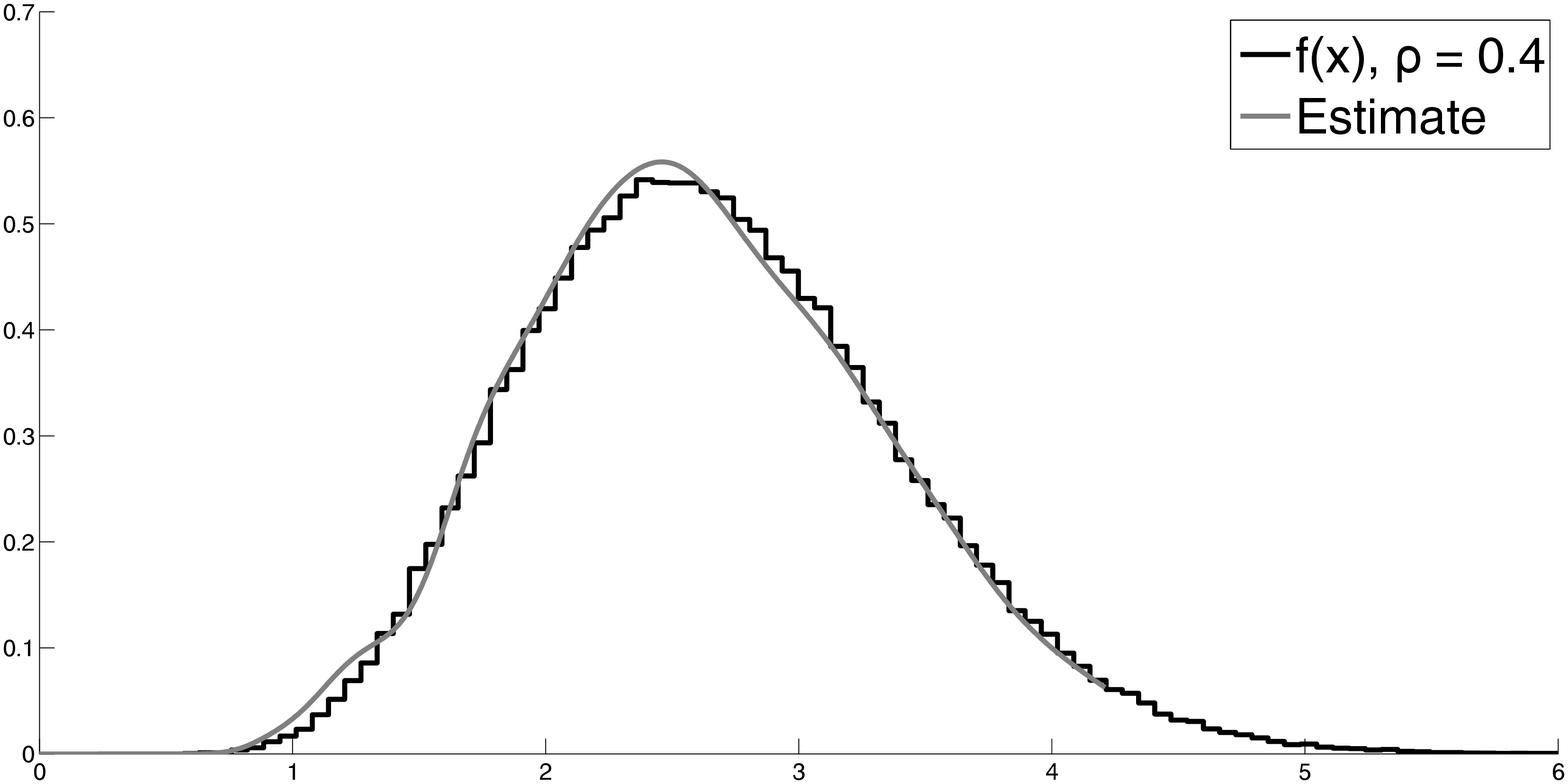}
\vspace{-4mm}
\end{minipage}
 \caption{Gamma-kernel estimates of the pdf of the AR model with the Maxwell noise and $\rho\in\{0.1, 0.2, 0.3, 0.4\}$ for the sample size $n = 2000$.\label{Fig5}}
\end{center}
\end{figure}
Since the gamma-kernel estimators perform good for the various dependence rates it is also true for the gamma-kernel pdf derivative estimators, but the bandwidth parameter must be selected differently.
\par Hence, this findings confirms the fact that the covariance term \eqref{4} of the pdf derivative
 is negligible in comparison with its variance and implies that one can use the same optimal bandwidth \eqref{6}, both for independent and strongly mixed dependent data.

\begin{acknowledgments}
I am grateful to my supervisor DrSci Alexander Dobrovidov for an interesting topic. The work
was partly supported by the Russian Foundation for Basic Research, grant 13-08-00744 A.

\end{acknowledgments}
\section{APPENDIX}
\begin{proof}[Proof of Lemma~\ref{Lem1}]
Taking an integral from \eqref{900} we get
\begin{eqnarray}\label{MISE}MISE(\widehat{f'}(x))= \int\limits_{0}^{\infty}(B(x)^2+V(x)+C(x))dx,
\end{eqnarray}
where
\begin{eqnarray}\label{C(x)}
C(x)=\frac{2}{n}\sum\limits_{i=1}^{n-1}\left(1-\frac{i}{n}\right)\mbox{cov}(K'_b(X_1),K'_b(X_{1+i})).
\end{eqnarray}
To evaluate  the covariance we shall apply Davydov's inequality
\begin{eqnarray}\label{Dovid}|\mbox{cov}(K'_b(X_1),K'_b(X_{1+i}))|\leq 2\pi \alpha(i)^{1/r}\parallel K'_b(X_1) \parallel_q \parallel K'_b(X_{1+i}) \parallel_p,
\end{eqnarray}
where $p^{-1}+q^{-1}+r^{-1}=1$, $1\leq p, q, r\leq\infty$, \cite{Bosq:96}.
\par The latter norm for the case $x\geq 2b$ is determined by
\begin{eqnarray}\label{K1par}\parallel K'_b(X_1) \parallel_q&=&\left(\int\left(\frac{1}{b}K(y)L_1(y)\right)^q f(y)dy\right)^{1/q}\\\nonumber
&=&\frac{1}{b}\left(\mathsf{E}\left(K(\xi_1)^{q-1}L_1(\xi_1)^q f(\xi_1)\right)\right)^{1/q},
\end{eqnarray}
where $L_1(t)$ is introduced in \eqref{L}. The kernel $K(\xi_1)$ was used in \eqref{K1par} as a density function and $\xi_1$ is a $Gamma(\rho_1(x),b)$ random variable.
\par In the case $ x\in [0,2b)$, similarly we have
 \begin{eqnarray}\label{K2par}\parallel K'_b(X_1) \parallel_q&=&\left(\int\left(\frac{x}{2b^2}K(y)L_2(y)\right)^q f(y)dy\right)^{1/q}\\\nonumber
 &=&\frac{x}{2b^2}\left(\mathsf{E}\left(K(\xi_2)^{q-1}L_2(\xi_2)^q f(\xi_2)\right)\right)^{1/q},
\end{eqnarray}
where $L_2(t)$ is determined by \eqref{L}, and $\xi_2$ is a $Gamma(\rho_2(x),b)$ random variable.
Expressions \eqref{K1par} and \eqref{K2par} are constructed similarly, thus to a certain point, we will not make differences between them.
\par By the standard theory of the gamma distribution it is known that $\mu = \mathsf{E}(\xi)= \rho_{b}(x) b$ and the
variance is given by $var(\xi)=\rho_{b}(x) b^2$. For simplicity, we further use the notation $\rho$ instead of $\rho_{b}(x)$ defined in \eqref{rho}.
\par The Taylor expansion of both mathematical expectations in \eqref{K1par}, \eqref{K2par} in the neighborhood  of $\mu$ is represented by
\begin{eqnarray*}\mathsf{E}\left(K(\xi)^{q-1}L(\xi)^q f(\xi)\right)
&=&K(\mu)^{q-1} L(\mu)^q f(\mu)+(K(\xi)^{q-1} L(\xi)^q f(\xi))'|_{\xi=\mu}\mathsf{E}(\xi-\mu)\\
&+&\left(K(\xi)^{q-1} L(\xi)^q f(\xi)\right)''|_{\xi=\mu}\frac{\mathsf{E}(\xi-\mu)^2}{2}+o\left(\mathsf{E}(\xi-\mu)^2\right).
\end{eqnarray*}
In the case when $x\geq2b$, $\mu=\rho b=x$, $var(\xi)=\rho b^2=xb$, we get
\begin{eqnarray*}\label{24}&&\mathsf{E}\left(K(\xi)^{q-1}L(\xi)^q f(\xi)\right)=
 \frac{K(x)^{q-1} }{b}\Bigg(qL(x)^{q+1}f'(x)-L(x)^qf(x)L'(x)\\
 &-&L(x)^{q+1}f'(x)+
bL(x)^qf''(x)+q^2L(x)^qf(x)L'(x)+bq^2L(x)^{q-2}(L'(x))^2f(x)\\
&+&2bqL(x)^{q-1}L'(x)f'(x)+bqL(x)^{q-1}f(x)L''(x)-bqL(x)^{q-2}(L'(x))^2\Bigg)\\
&+&\frac{K(x)^{q-1}L(x)(q-1)}{b^2}\Bigg((q-1)f(x)L(x)^{q+1}+bL(x)^qf'(x)\\
&+&bqL(x)^{q-1}f(x)L'(x)\Bigg)+o\left(b^2\right).
\end{eqnarray*}
Using Stirling's formula \begin{eqnarray*}\label{stirling}\Gamma(z)=\sqrt{\frac{2\pi}{z}}\left(\frac{z}{e}\right)^z\left(1+O\left(\frac{1}{z}\right)\right),\end{eqnarray*} we can  rewrite the kernel function as
\begin{eqnarray*}&&K(t)=\frac{t^{\rho-1}\exp(-t/b)}{b^{\rho}\Gamma(\rho)}=\frac{t^{\rho-1}\exp(-t/b)\exp(\rho)}{b^{\rho}\sqrt{2\pi}\rho^{\rho-\frac{1}{2}}(1+O(1/\rho))}.
\end{eqnarray*}
 Taking $\rho=\rho_1(x)$ according to \eqref{rho}, $t=x$, it holds
\begin{eqnarray*}&&K(\rho_1(x) b)=\frac{1}{\sqrt{2\pi}}\frac{x^{x/b-1}\exp((x-x)/b)}{b^{\frac{x}{b}}\frac{x}{b}^{\frac{x}{b}-\frac{1}{2}}(1+O(b/x))}=\frac{x^{-\frac{1}{2}}b^{-\frac{1}{2}}}{\sqrt{2\pi}(1+O(b/x))}.
\end{eqnarray*}
Hence, its upper bound is given by
\begin{eqnarray}\label{K1}&&K(x)\leq\frac{1}{\sqrt{2\pi x b}}.
\end{eqnarray}
Next, using the property of the Digamma function $\Psi(x) = \ln(x) - \frac{1}{2x} - \frac{1}{12x^2} + \frac{1}{120x^4}+O(1/x^6)$,
the first equation in \eqref{L} can de rewritten as
\begin{eqnarray}\label{L1}L_1(\rho_1 b)&=&\ln(\rho_1 b)-\ln(b)-\Psi(\rho_1)=
\frac{b}{2x}+\frac{b^2}{12x^2}+o(b^2).
\end{eqnarray}
Then substituting \eqref{24} in \eqref{K1par}
and using the expressions \eqref{K1} and \eqref{L1}, we deduce
\begin{eqnarray*}&&\parallel K'_b(X_1) \parallel_q\leq\pi^{\frac{1-q}{2q}}(2x)^{\frac{1-q}{2q}-1}b^{\frac{1-q}{2q}}
\Bigg(b^2C_2(q,x)+bC_1(q,x)+C_3(q,x)\Bigg)^{1/q}\!\!+\!o(b^2),
\end{eqnarray*}
where we used the notations
\begin{eqnarray}\label{C123}C_1(q,x)&=&-f(x)\frac{2q^3-9q^2+4q-33}{24x}-f'(x)\frac{q+1}{2}+f''(x)\frac{x}{2},\\\nonumber
C_2(q,x)&=&f(x)\frac{2q+54x-q^2x+21q^3x+q^4x+93qx}{144x^3}\\\nonumber
&-&f'(x)\frac{(q+1)^2}{12x}+f''(x)\frac{q+1}{12},\\\nonumber
C_3(q,x)&=&-f(x)\frac{(q+1)(q-2)}{2}.
\end{eqnarray}
The same steps can be done for $\parallel K'_b(X_{1+i}) \parallel_p$ from \eqref{Dovid}.
Then, if $p=q$ holds, one can represent Davydov's inequality \eqref{Dovid}  as
\begin{eqnarray}\label{cov}&&|cov(K'_b(X_1),K'_b(X_{1+i}))|\leq \\\nonumber
&\leq&2\pi \alpha(i)^\frac{1}{r}\pi^{\frac{1-q}{q}}(2x)^{\frac{1-q}{q}-2}b^{\frac{1-q}{q}}
\Bigg(b^2C_2(q,x)+bC_1(q,x)+C_3(q,x)\Bigg)^{2/q}+o(b^2).
\end{eqnarray}
Using \eqref{cov} and taking $p=q=2+\delta$, $r=\frac{2+\delta}{\delta}$ it can be deduced that the covariance \eqref{C(x)} is given by
\begin{eqnarray*}&&|C(x)|=\left|\frac{2}{n}\sum\limits_{i=1}^{n-1}\left(1-\frac{i}{n}\right)cov(K'_b(X_1),K'_b(X_{1+i}))\right|\\
&\leq&
\Bigg|\Bigg(2^{-\frac{2\delta+3}{\delta+2}}\pi^{\frac{1}{\delta+2}}x^{-\frac{3\delta+5}{\delta+2}}\frac{b^{-\frac{\delta+1}{\delta+2}}}{n}
\Bigg(b^2C_2(\delta,x)+bC_1(\delta,x)+C_3(\delta,x)\Bigg)^{\frac{2}{2+\delta}}\Bigg)\\
&\cdot&\sum\limits_{i=1}^{n-1}\left(1-\frac{i}{n}\right)\alpha(i)^{\frac{\delta}{2+\delta}}\Bigg|+o(b^2).
\end{eqnarray*}
Then we can estimate the covariance by the previous expressions
\begin{eqnarray*}\label{covsum}\nonumber|C(x)|&\leq&S(b,x,\delta,n)\sum\limits_{\tau=2}^{n}\left(1-\frac{\tau-1}{n}\right)\alpha(\tau-1)^{\frac{\delta}{2+\delta}}+o(b^2)\\\nonumber
&\leq&S(b,x,\delta,n)\sum\limits_{\tau=2}^{\infty}\alpha(\tau-1)^{\frac{\delta}{2+\delta}}+o(b^2)
\leq S(b,x,\delta,n)\int\limits_{1}^{\infty}\alpha(\tau)^{\frac{\delta}{2+\delta}}d\tau+o(b^2),
\end{eqnarray*}
where we used the following notation
\begin{eqnarray*}S(b,x,\delta,n)&=&2^{-\frac{2\delta+3}{\delta+2}}\pi^{\frac{1}{\delta+2}}x^{-\frac{3\delta+5}{\delta+2}}\frac{b^{-\frac{\delta+1}{\delta+2}}}{n}
\Bigg(b^2C_2(\delta,x)+bC_1(\delta,x)+C_3(\delta,x)\Bigg)^{\frac{2}{2+\delta}}.
\end{eqnarray*}
Let us denote $\frac{\delta}{2+\delta}=\upsilon$, $0<\upsilon<1$. Then, in this notations, we get
the estimate of the covariance
\begin{eqnarray*}&&|C(x)|\leq\\
&\leq&\Bigg(2^{-\frac{\upsilon+3}{2}}\pi^{\frac{1-\upsilon}{2}}x^{-\frac{\upsilon+5}{2}}\frac{b^{-\frac{\upsilon+1}{2}}}{n}
\Bigg(bC_1(\upsilon,x)+C_3(\upsilon,x)\Bigg)^{1-\upsilon}+o(b^2)\Bigg)\int\limits_{1}^{\infty}\alpha(\tau)^{\upsilon}d\tau.
\end{eqnarray*}
By  $0<\upsilon<1$ then it follows
\begin{eqnarray*}&&|C(x)|\sim\frac{1}{n}b^{-\frac{\upsilon+1}{2}}.
\end{eqnarray*}
\begin{remark}The main contribution to MISE \eqref{MISE} is provided by the part corresponding to $x\geq2b$, so  we will not do similar calculations here and further for $x\in[0,2b)$ as $b\rightarrow0$.
\end{remark}
\end{proof}
\begin{proof}[Proof of Theorem~\ref{thm}]\label{ap2}
Regarding the dependent case it is known that the MISE contains the bias, the variance and the covariance.
By \eqref{5} it follows that the integrated sum of the squared bias and variance is the following expression
\begin{eqnarray}\label{13}\nonumber&&\int\limits_0^\infty(B(x)^2+V(x))dx=\frac{b^2}{16}\int\limits_{0}^\infty P(x) dx
\\
&+&  \int\limits_{0}^{\infty} \frac{n^{-1}b^{-\frac{3}{2}}x^{-\frac{3}{2}}}{4\sqrt{\pi}}\left(f(x)+\frac{b}{2}\left(\frac{f(x)}{x}-f'(x)\right)\right)dx+o(b^2 + n^{-1}b^{-\frac{3}{2}}).
\end{eqnarray}
This corresponds to the independent case.
\par By integration of \eqref{4} we get the upper bound of the integrated covariance
\begin{eqnarray}\label{14}&&\int\limits_0^\infty \! C(x)dx\leq\!\!\int\limits_{0}^{\infty}\!\!\!\Bigg(2^{-\frac{\upsilon+3}{2}}\pi^{\frac{1-\upsilon}{2}}x^{-\frac{\upsilon+5}{2}}\frac{b^{-\frac{\upsilon+1}{2}}}{n}
C_3(\upsilon,x)^{1-\upsilon}+o(b^2)\Bigg)\!\!\!\int\limits_{1}^{\infty}\alpha(\tau)^{\upsilon}d\tau dx.
\end{eqnarray}
Combining \eqref{13} and \eqref{14}, one  can write
\begin{eqnarray*} &&MISE(f'(x))\leq\int\limits_{0}^{\infty} \frac{n^{-1}b^{-3/2}x^{-3/2}}{4\sqrt{\pi}}\left(f(x)+\frac{b}{2}\left(\frac{f(x)}{x}-f'(x)\right)\right)dx\\ \nonumber
&+& \int\limits_{0}^{\infty}2^{-\frac{\upsilon+3}{2}}\pi^{\frac{1-\upsilon}{2}}x^{-\frac{\upsilon+5}{2}}\frac{b^{-\frac{\upsilon+1}{2}}}{n}
C_3(\upsilon,x)^{1-\upsilon}dx\int\limits_{1}^{\infty}\alpha(\tau)^{\upsilon}d\tau \\\nonumber
&+&\frac{b^2}{16}\int\limits_{0}^{\infty} P(x)dx+o(b^2 + n^{-1}b^{-\frac{5}{2}}).
\end{eqnarray*}
The derivative of this expression in b leads to
\begin{eqnarray}&&\label{100500} \nonumber\frac{b}{8}\int\limits_{0}^{\infty} P(x)dx-
\frac{3n^{-1}b^{-\frac{5}{2}}}{8\sqrt{\pi}}\int\limits_{0}^{\infty} x^{-\frac{3}{2}}f(x)dx\\
&+&\frac{n^{-1}b^{-\frac{3}{2}}}{16\sqrt{\pi}}\int\limits_{0}^{\infty} x^{-\frac{3}{2}}\left(\frac{f(x)}{x}-f'(x)\right)dx\\\nonumber
&-& \int\limits_{0}^{\infty}\frac{\upsilon+1}{2}2^{-\frac{\upsilon+3}{2}}\pi^{\frac{1-\upsilon}{2}}x^{-\frac{\upsilon+5}{2}}\frac{b^{-\frac{\upsilon+3}{2}}}{n}
C_3(\upsilon,x)^{1-\upsilon}dx\int\limits_{1}^{\infty}\alpha(\tau)^{\upsilon}d\tau =0.\nonumber
\end{eqnarray}
Since $0<\upsilon<1$ holds as in Lemma \ref{Lem1},
the third  term in \eqref{100500} by $b$ has the worst rate
\begin{eqnarray*}c_1b^{-\frac{\upsilon+3}{2}}&=&O\left(b^{-\frac{3}{2}}\right),
\end{eqnarray*}
where $c_1$ is a constant.
\par Neglecting terms with $b^{-3/2}$ and  $b^{-\frac{\upsilon+3}{2}}$  in comparison to the term containing $b^{-5/2}$, we simplify the equation
\begin{eqnarray*}
&& \frac{b^{7/2}}{8}\int\limits_{0}^{\infty} P(x)dx-
\frac{3n^{-1}}{8\sqrt{\pi}}\int\limits_{0}^{\infty} x^{-\frac{3}{2}}f(x)dx+o(b^{7/2})=0.
\end{eqnarray*}
The optimal $b=o(n^{-2/7})$ is the same as in \eqref{6}. Let us insert such $b$ in \eqref{10}
\begin{eqnarray}\label{12_1} &&MISE_{opt}(\hat f'(x))=\int_0^\infty\frac{P(x)n^{-\frac{4}{7}}}{16}
T^{\frac{4}{7}}dx+\int\limits_{0}^{\infty} \frac{n^{-4/7}T^{-3/7}x^{-3/2}}{4\sqrt{\pi}}f(x)dx\\\nonumber
&+&
\int\limits_{0}^{\infty} \frac{n^{-6/7}T^{-1/7}x^{-3/2}}{8\sqrt{\pi}}\left(\frac{f(x)}{x}-f'(x)\right)dx\\
&+& \int\limits_0^\infty \Bigg(2^{-\frac{\upsilon+3}{2}}\pi^{\frac{1-\upsilon}{2}}x^{-\frac{\upsilon+5}{2}}\frac{T^{-\frac{\upsilon+1}{7}}}{n^{\frac{6-\upsilon}{7}}}
C_3(\upsilon,x)^{1-\upsilon}dx\int\limits_1^\infty\alpha(\tau)^\upsilon d\tau, \nonumber
\end{eqnarray}
where
\begin{eqnarray*}T&=&\frac{3\int_0^\infty x^{-3/2}f(x)dx}{\sqrt{\pi}\int_{0}^\infty
\left(\frac{f(x)}{3x^2}+f''(x)\right)^2dx}.
\end{eqnarray*}
The last term in \eqref{12_1} has the rate $o(n^{\frac{\upsilon-6}{7}})$. By $0<\upsilon<1$ we get that the optimal rate of convergence of MISE is given by
$ MISE_{opt}(\hat f'(x)) = O(n^{-4/7})$.
\end{proof}
\begin{proof}[Proof of Lemma~\ref{lem3}]\label{ap3}
We have to prove that $\alpha(\tau)$ defined by \eqref{alpha} satisfies the conditions
of Lemma \ref{Lem1}. Conditions 2 and  3 of Lemma \ref{Lem1} only refer to the density distribution.
Thus, we remain  to
check only the first condition of Lemma \ref{Lem1}.
\par To this end, using \eqref{alpha} we get
\begin{eqnarray}\label{17} \int\limits_1^{\infty}\alpha(\tau)^\upsilon d\tau &\leq&
\int\limits_1^{\tau_0}d\tau+\int\limits_{\tau_0}^{\infty}\left(2(C+1)\mathsf E|X_i|^\nu|\rho^\nu|^\tau\right)^\upsilon d\tau\\
&=&\tau_0-1+\left(2(C+1)\mathsf E|X_i|^\nu\right)^\upsilon \int\limits_{\tau_0}^{\infty}\left(|\rho^\nu|^\tau\right)^\upsilon d\tau.\nonumber
\end{eqnarray}
The integral in \eqref{17} can be taken in general as
\begin{eqnarray*}&&\label{18} \int\limits_{\tau_0}^{\infty}\left(|\rho^\nu|^\tau\right)^\upsilon d\tau = \frac{|\rho^\nu|^{\tau \upsilon}}{\upsilon\ln(|\rho^\nu|)}\Big|_{\tau_0}^{\infty}
\end{eqnarray*}
Thus, to satisfy the first condition of Lemma \ref{Lem1}, it must be
\begin{eqnarray}&& \label{19}|\rho^\nu|^{\tau\upsilon}\Big|_{\tau=\infty}<\infty.
\end{eqnarray}
Since $\rho\in(-1,1)$ holds, it follows $|\rho|\in[0,1)$. For $\rho=0$ \eqref{19} is satisfied. For $|\rho|\in(0,1)$  one can rewrite \eqref{19} as
\begin{eqnarray*}&& \label{20}\left(\frac{1}{\xi}\right)^{\nu\tau\upsilon}\Big|_{\tau=\infty}<\infty, \quad \xi>1,
\end{eqnarray*}
which is valid as $\nu\upsilon>0$. The latter is true since $0<\upsilon<1$ and $\nu=\min\{p,q,1\}>0$.
Thus, the strong mixing AR(1) process \eqref{15}  satisfies Lemma \ref{Lem1}. Hence, it satisfies the conditions of Theorem \ref{thm}.
\end{proof}


\end{document}